\declaretheorem[name=Theorem,numberwithin=section]{theorem}
\declaretheorem[name=Definition,numberwithin=section]{definition}
\declaretheorem[name=Assumption,numberwithin=section]{assumption}
\declaretheorem[name=Remark,numberwithin=section]{remark}
\DeclareMathOperator*{\argmin}{arg\,min}
\newcommand{\conv}{\operatorname{conv}}
\definecolor{myblue}{rgb}{0,0,0.8}
\definecolor{myorange}{rgb}{1,0.5,0}
\definecolor{mygreen}{rgb}{0,0.6,0}
\definecolor{lgray}{rgb}{0.8,0.8,0.8}
\newcommand{\EE}{\mathbb{E}}
\title{Bridging Constraints and Stochasticity: A Fully First-Order Method for Stochastic Bilevel Optimization with Linear Constraints}
\author{Cac Phan\thanks{Georgia Institute of Technology, \texttt{ctphan@gatech.edu}}, Kai Wang\thanks{Georgia Institute of Technology, \texttt{kwang692@gatech.edu}}}
\date{\today}
\begin{document}
\maketitle

\begin{abstract}
This work provides the first finite-time convergence guarantees for linearly constrained stochastic bilevel optimization using only first-order methods, requiring solely gradient information without any Hessian computations or second-order derivatives. We address the unprecedented challenge of simultaneously handling linear constraints, stochastic noise, and finite-time analysis in bilevel optimization, a combination that has remained theoretically intractable until now. While existing approaches either require second-order information, handle only unconstrained stochastic problems, or provide merely asymptotic convergence results, our method achieves finite-time guarantees using gradient-based techniques alone. We develop a novel framework that constructs hypergradient approximations via smoothed penalty functions, using approximate primal and dual solutions to overcome the fundamental challenges posed by the interaction between linear constraints and stochastic noise. Our theoretical analysis provides explicit finite-time bounds on the bias and variance of the hypergradient estimator, demonstrating how approximation errors interact with stochastic perturbations. We prove that our first-order algorithm converges to $(\delta, \epsilon)$-Goldstein stationary points using $\Theta(\delta^{-1}\epsilon^{-5})$ stochastic gradient evaluations, establishing the first finite-time complexity result for this challenging problem class and representing a significant theoretical breakthrough in constrained stochastic bilevel optimization.
\end{abstract}

\section{Introduction}
Bilevel optimization is a powerful paradigm for hierarchical decision-making in machine learning, including hyperparameter tuning~\cite{franceschi2018}, meta-learning~\cite{finn2017model}, and reinforcement learning~\cite{konda1999actor}.

The standard formulation can be written as the following optimization problem:
\begin{equation}\label{bilevelformulation}
\min_{x \in X} f(x, y^*(x)) \quad \text{s.t.} ~ y^*(x) \in \operatorname*{arg\,min}_{y \in S(x)} g(x, y),
\end{equation}
where we define $F(x) \coloneqq f(x, y^*(x))$ to be the overall bilevel objective as a function of $x$.
Traditional methods of solving bilevel optimization often face scalability challenges~\cite{pedregosa2016hyperparameter, franceschi2017forward}, including implicit differentiation with its requisite Hessian computations~\cite{amos2017optnet,ji2021bilevel,khanduri2024doubly,hu2023contextual,ghadimi2018approximation}, and iterative differentiation characterized by high memory and computational demands.~\cite{shen2024memory,brauer2024learning}

A recent breakthrough in bilevel optimization by~\cite{kwon2023,liu2022bome} proposes using reformulation and penalty-based approaches to design a fully first-order gradient proxy.
Several follow-up works based on this breakthrough have emerged, including improving finite time convergence of unconstrained bilevel optimization~\cite{chen2024finding,yang2023achieving,chen2024optimal,kwon2024complexity}, constrained bilevel optimization~\cite{khanduri2023linearly,kornowski2024,yao2024constrained,lu2024first}, and applications of bilevel algorithms to machine learning~\cite{pan2024scalebio,zhang2024introduction,petrulionyte2024functional}.

However, many real-world scenarios involve \textit{stochastic} objectives and \textit{constraints} together, where gradients are noisy estimates from samples. While methods for unconstrained stochastic bilevel optimization have advanced (e.g., \cite{ghadimi2018approximation, kwon2023,liu2022bome}), the confluence of stochasticity and explicit LL linear constraints poses significant unresolved challenges. 
A critical gap persists: no existing methods offer finite-time convergence guarantees for bilevel problems that are simultaneously \textit{stochastic} and \textit{linearly constrained} in the lower-level problem.

This paper bridges this gap by introducing the Fully First-order Constrained Stochastic Approximation (F2CSA) algorithm. We build upon the deterministic framework of~\cite{kornowski2024}, developing a novel smoothed, stochastic hypergradient oracle tailored for bilevel problems with linearly constrained LL subproblems and stochastic objectives. The key to our approach is a smoothed reformulation that handles inexact dual variables, enabling robust hypergradient estimation from noisy first-order information and inexact LL solves. Our theoretical analysis, based on Lipschitz continuity and careful variance-bias tradeoffs, yields the first finite-time complexity guarantees for reaching $(\delta,\epsilon)$-Goldstein stationary points in this setting.

Our contributions include:
\begin{itemize}[leftmargin=*]
    \item \textbf{Stochastic Inexact Hypergradient Oracle:} We develop a stochastic inexact hypergradient oracle based on a smoothed Lagrangian method with penalty weights $\alpha_1 = \alpha^{-2}$ and $\alpha_2 = \alpha^{-4}$ where $\alpha > 0$. This oracle approximates hypergradients with bias bounded by $O(\alpha)$ and variance bounded by $O(1/N_g)$ using $N_g$ first-order gradient evaluations. Our smoothed Lagrangian method generalizes the approach from \cite{kornowski2024} to allow approximate primal-dual lower-level solutions for constructing inexact hypergradient oracles.


    \item \textbf{Convergence Guarantees:} We apply the stochastic inexact hypergradient oracle with parameter $\alpha = \epsilon$ and $N_g = O(\epsilon^{-2})$ to design a double-loop algorithm for stochastic bilevel optimization problems with linear constraints. This algorithm attains a $(\delta, \epsilon)$-Goldstein stationary point of $F(x)$ with total first-order oracle complexity $\tilde{O}(\delta^{-1}\epsilon^{-5})$. This generalizes the deterministic bilevel optimization result with linear constraints (rate $\tilde{O}(\delta^{-1} \epsilon^{-4})$) to the stochastic setting.



\end{itemize}

Our work provides the first finite-time convergence guarantees for linearly constrained stochastic bilevel optimization under standard stochastic assumptions, providing a theoretically sound yet practical alternative to traditional bilevel optimization approaches.

\section{Related Work}

\textbf{Penalty Methods and First-Order Reformulations.} To reduce the computational cost of second-order derivatives in bilevel optimization, recent works have proposed scalable, first-order algorithms based on penalty formulations~\cite{kwon2023,liu2022bome}. These techniques transform constrained bilevel problems into single-level surrogates that can be solved efficiently with convergence guarantees, where deterministic, partially stochastic, and fully stochastic bilevel optimization can achieve $\epsilon$-stationary point in $O(\epsilon^{-2})$, $O(\epsilon^{-4})$, $O(\epsilon^{-6})$ gradient calls, respectively~\cite{chen2024optimal}. The convergence rate of the deterministic case can be further improved to $O(\epsilon^{-1.5})$ by momentum-based method~\cite{yang2023achieving}.

\paragraph{Bilevel Optimization with Linear Constraints.}
Due to the nonsmoothness of constrained bilevel optimization problems, ~\cite{kornowski2024} focuses on Goldstein stationary~\cite{goldstein1977} and designs a new penalty method to achieve a zero-th order algorithm with $O(\delta^{-1} \epsilon^{-3})$ convergence and a first-order algorithm with $O(\delta^{-1} \epsilon^{-4})$ convergence to a ($\epsilon,\delta$)-Goldstein stationary point.
On the other hand, \cite{yao2024constrained,lu2024first} consider a different stationarity using $\epsilon$-KKT stationary, where \cite{lu2024first} achieves a $O(\epsilon^{-7})$ convergence rate, and \cite{yao2024constrained} achieves a $O(\epsilon^{-2})$ rate under a stronger assumption of access to projection operators.



\paragraph{Nonsmooth and nonconvex optimization.} The nonsmooth and nonconvex structure of bilevel optimization with constraints makes its analysis closely related to nonsmooth nonconvex optimization. The best-known convergence result is given by \cite{zhang2020complexity}, which establishes optimal convergence rates of $O(\delta^{-1} \epsilon^{-3})$ for the deterministic case and $O(\delta^{-1} \epsilon^{-4})$ for the stochastic case. Our result of $\tilde{O}(\delta^{-1}\epsilon^{-5})$ is one factor of $\epsilon$ away from the optimal stochastic rate, indicating potential room for improvement. Note that $\delta$ and $\epsilon$ are independent parameters: $\delta$ controls the radius of the Goldstein subdifferential approximation, while $\epsilon$ controls the stationarity accuracy. In our analysis, we treat $\delta$ as a fixed parameter (not scaling with $\epsilon$), which is consistent with the standard treatment in nonsmooth optimization literature.


\section{Problem Formulation and Penalty-Based Approximation}

We consider the following linearly constrained bilevel optimization problem:

\begin{equation}\label{constrainedbilevel}
\begin{aligned}
\min_{x \in X} F(x) &:= \mathbb{E}_{\xi}\left[ f(x, y^*(x); \xi) \right] \\
\text{s.t.} \quad y^*(x) &\in \argmin_{y \in \mathbb{R}^m : h(x, y) \leq \mathbf{0}} \mathbb{E}_{\zeta}\left[ g(x, y; \zeta) \right]
\end{aligned}
\end{equation}

Here, $x \in \mathbb{R}^n$ denotes the upper-level (UL) decision variable constrained to a closed convex set $X \subseteq \mathbb{R}^n$, and $y \in \mathbb{R}^m$ is the lower-level (LL) variable. The UL and LL objective functions $f(x, y; \xi)$ and $g(x, y; \zeta)$ are stochastic, depending on random variables $\xi$ and $\zeta$, respectively, which model data or simulation noise. Expectations are taken with respect to the underlying distributions of these random variables.

The LL feasible region is defined by a set of $p$ linear inequality constraints:
\begin{equation}\label{linearconstraints}
h(x, y) := \mathbf{A} x - \mathbf{B} y - \mathbf{b} \leq \mathbf{0},
\end{equation}
where $\mathbf{A} \in \mathbb{R}^{p \times n}$, $\mathbf{B} \in \mathbb{R}^{p \times m}$, and $\mathbf{b} \in \mathbb{R}^p$ are known matrices and vector. We assume the norm of the matrices are bounded by a given constant: $\| A \| \leq M_{\nabla h}$ and $\| B \| \leq M_{\nabla h}$, which also ensures that the Jacobian of the constraint function $h(x,y)$ is bounded. 

Directly solving the stochastic bilevel problem is challenging due to the implicit dependence of $F(x)$ on $y^*(x)$ and the presence of noise in gradient and function evaluations.

\subsection{Assumptions}

We apply the following standard assumptions to our problem.

\begin{assumption}[Smoothness and Strong Convexity]\label{assumption:smoothness}
We make the following assumptions on the objectives $f, g$, constraints $h$, and associated matrices:
\begin{itemize}
    \item[(i)] \textbf{Upper-Level Objective $f$:} The function $f(x,y)$ is $C_f$-smooth in $(x,y)$ (i.e., its gradient $\nabla f$ is $C_f$-Lipschitz continuous). The function $f(x,y)$ is also $L_f$-Lipschitz continuous in $(x,y)$.

    \textit{Note: $L_f$-Lipschitz continuity of $f$ implies that its gradient norm is bounded, i.e., $\|\nabla f(x,y)\| \le L_f$.}

    \item[(ii)] \textbf{Lower-Level Objective $g$:} The function $g(x,y)$ is $C_g$-smooth in $(x,y)$ (i.e., its gradient $\nabla g$ is $C_g$-Lipschitz continuous).
    For each fixed $x \in X$, the function $g(x,\cdot)$ is $\mu_g$-strongly convex in $y$, with $\mu_g > 0$.
    The gradient norm is bounded: $\|\nabla g(x,y)\| \le L_g$. \textit{(Here, $L_g$ is a positive constant bounding the norm of $\nabla g$.)}

   \textit{Note: The combination of strong convexity and bounded gradients is standard in bilevel optimization. Strong convexity ensures uniqueness of the lower-level solution, while bounded gradients are naturally satisfied when the domain $X \times Y$ is compact or when $g$ has bounded sublevel sets. This assumption is consistent with prior work on constrained bilevel optimization~\cite{kornowski2024}.}

    \item[(iii)] \textbf{Constraint Qualification (LICQ):}
    The Linear Independence Constraint Qualification holds for the lower-level constraints at the optimal solution $y^*(x)$ for all $x \in X$. (Specifically, the Jacobian of the active constraints with respect to $y$, given by $-\mathbf{B}$ restricted to its active rows, has full row rank.)
\end{itemize}
\end{assumption}
Under Assumption~\ref{assumption:smoothness}, the uniqueness of the LL solution $y^*(x)$ and the corresponding multipliers $\lambda^*(x)$ is guaranteed.

\begin{assumption}[Global Lipschitz continuity of LL solution]\label{assumption:lipschitzness}
The optimal lower-level solution map $y^*(x)$ is globally $L_y$-Lipschitz continuous in $x$ on $X$. That is, there exists $L_y \geq 0$ such that $\|y^*(x) - y^*(x')\| \leq L_y \|x - x'\|$ for all $x, x' \in X$.
\end{assumption}

This Lipschitz assumption is a standard assumption in bilevel optimization analysis to ensure convergence analysis \citep{facchinei2003finite}. Given the global Lipschitzness, the resulting UL objective $F(x) = f(x, y^*(x))$ is then $L_F$-Lipschitz continuous with $L_F \leq L_{f,x} + L_{f,y} L_y$.

We assume access only to noisy first-order information via stochastic oracles.

\begin{assumption}[Stochastic Oracles]\label{assumption:stochastic_oracle}
Stochastic first-order oracles (SFOs) $\nabla \tilde{f}(x, y, \xi)$, $\nabla \tilde{g}(x, y; \zeta)$ are available, satisfying:
\begin{itemize}
    \item[(i)] \textbf{Unbiasedness:} $\mathbb{E}[\nabla \tilde{f}(x, y; \xi)] = \nabla f(x, y)$ and
    $\mathbb{E}[\nabla \tilde{g}(x, y; \zeta)] = \nabla g(x, y)$.
    \item[(ii)] \textbf{Bounded Variance:} $\mathbb{E}[\|\nabla \tilde{f} - \nabla f\|^2] \leq \sigma^2$ and $\mathbb{E}[\|\nabla \tilde{g} - \nabla g\|^2] \leq \sigma^2$.
\end{itemize}
\end{assumption}

\subsection{Goldstein Stationarity}

Due to the potential nonsmoothness of $F(x)$, we target Goldstein stationarity, a robust concept for nonsmooth optimization.

\begin{definition}[Goldstein Subdifferential \citep{goldstein1977}]\label{def:goldstein}
For an $L_F$-Lipschitz function $F: \mathbb{R}^n \to \mathbb{R}$, $x \in \mathbb{R}^n$, $\delta > 0$:
$$
\partial_\delta F(x) := \conv\left(\bigcup_{z \in B_\delta(x)} \partial F(z)\right)
$$
where $\partial F(z)$ is the Clarke subdifferential, and $B_\delta(x)$ is the $\delta$-ball around $x$.
\end{definition}

\begin{definition}[$(\delta,\epsilon)$-Goldstein Stationarity]\label{def:goldstein_stationarity}
A point $x \in X$ is $(\delta,\epsilon)$-Goldstein stationary if
$$
dist(\mathbf{0}, \partial_\delta F(x) + \mathcal{N}_X(x)) \leq \epsilon,
$$
where $\mathcal{N}_X(x)$ is the normal cone to $X$ at $x$.
\end{definition}

\section{Error Analysis for Stochastic Hypergradient}
\label{sec:error_analysis}
We first control the effect of inexact dual variables on the penalty gradient and propagate this control to the shift of the penalized lower-level minimizer (Lemmas~\ref{lemma:dual_extraction} and~\ref{lemma:solution_approx}), which together yield an $O(\alpha)$ bias for the oracle (Lemma~\ref{lemma:bias_bound}). We then bound the sampling variance by $O(1/N_g)$ (Lemma~\ref{lemma:variance-bound}); Theorem~\ref{thm:hypergradient-accuracy} consolidates these bounds, and Lemma~\ref{lem:inner-cost} records the inner method cost $\tilde O(\alpha^{-2})$.

\subsection{Stochastic Implementation}
We compute the stochastic hypergradient oracle via a penalty formulation with smooth activation. The oracle takes a point $x$ and accuracy parameter $\alpha$, and returns a stochastic approximation $\nabla\tilde{F}(x)$ of the true hypergradient $\nabla F(x)$. The key steps are: (1) approximately solve the constrained lower-level problem to obtain $\tilde{y}^*(x)$ and dual variables $\tilde{\lambda}(x)$; (2) construct a smoothed penalty Lagrangian $L_{\tilde{\lambda},\alpha}(x,y)$; (3) minimize this Lagrangian to get $\tilde{y}(x)$; and (4) estimate the hypergradient by averaging stochastic gradient samples.

\begin{algorithm}[ht]
\caption{Stochastic Penalty-Based Hypergradient Oracle}
\label{alg:hypergradient_oracle}
\begin{algorithmic}[1]
\STATE \textbf{Input:} Point $x \in \mathbb{R}^n$, accuracy parameter $\alpha > 0$, stochastic variance $\sigma^2$, batch size $N_g$
\STATE \textbf{Set:} $\alpha_1 = \alpha^{-2}$, $\alpha_2 = \alpha^{-4}$, $\delta = \alpha^3$
\STATE Compute $(\tilde{y}^*(x), \tilde{\lambda}(x))$ using SGD (Lemma~\ref{lem:inner-cost}) s.t. $\mathbb{E}[\|\tilde{y}^*(x) - y^*(x)\|] \leq O(\delta)$ and $\mathbb{E}[\|\tilde{\lambda}(x) - \lambda^*(x)\|] \leq O(\delta)$
\STATE Define the smooth Lagrangian $L_{\tilde{\lambda},\alpha}(x, y)$ from Eq~\ref{penaltylagrangian} and smooth activation function $\rho(x)$
\STATE Compute $\tilde{y}(x) = \argmin_y L_{\tilde{\lambda},\alpha}(x, y)$ by SGD such that $\|\tilde{y}(x) - y^*_{\tilde{\lambda},\alpha}(x)\| \leq \delta$
\STATE Compute $\nabla\tilde{F}(x) = \frac{1}{N_g}\sum_{j=1}^{N_g} \nabla_x \tilde{L}_{\tilde{\lambda},\alpha}(x, \tilde{y}(x); \xi_j)$ by $N_g$ independent samples
\STATE \textbf{Output:} $\nabla\tilde{F}(x)$
\end{algorithmic}
\end{algorithm}

\begin{remark}[Inner Loop Complexity]
 With $g(x,\cdot)$ strongly convex and smooth, an SGD inner loop attains $O(\delta)$ accuracy in $O(\kappa_g\log(1/\delta))$ iterations. Setting $\delta=\Theta(\alpha^3)$ and $\alpha=\Theta(\epsilon)$ yields $O(\kappa_g\log(1/\epsilon))$ iterations, so the inner workload grows only logarithmically in $1/\epsilon$.
\end{remark}

\textbf{Smooth Activation Function:} To regularize constraint activation near the boundary, define $\rho_i(x) = \sigma_h(h_i(x, \tilde{y}^*(x))) \cdot \sigma_\lambda(\tilde{\lambda}_i(x))$ where:
$$\sigma_h(z) = \begin{cases}
0 & \text{if } z < -\tau \delta \\
\frac{\tau \delta + z}{\tau \delta} & \text{if } -\tau \delta \leq z < 0 \\
1 & \text{if } z \geq 0
\end{cases}, \quad
\sigma_\lambda(z) = \begin{cases}
0 & \text{if } z \leq 0 \\
\frac{z}{\epsilon_\lambda} & \text{if } 0 < z < \epsilon_\lambda \\
1 & \text{if } z \geq \epsilon_\lambda
\end{cases}$$

with $\tau = \Theta(\delta)$ and $\epsilon_\lambda > 0$ being small positive parameters.

The penalty function for hypergradient estimation is:
\begin{equation}\label{penaltylagrangian}
\begin{aligned}
L_{\tilde{\lambda},\alpha}(x,y) &= f(x, y) + \alpha_1 \left( g(x, y) + (\tilde{\lambda}(x))^T h(x, y) \right.  \left. - g(x, \tilde{y}^*(x)) \right) + \frac{\alpha_2}{2} \sum_{i=1}^{p} \rho_i(x) \cdot h_i(x, y)^2
\end{aligned}
\end{equation}
where $\alpha_1 = \alpha^{-2}$ and $\alpha_2 = \alpha^{-4}$ for $\alpha > 0$. The terms with $\tilde{\lambda}(x)$ and $\tilde{y}^*(x)$ promote KKT consistency and enforce constraints through a smoothed quadratic penalty.

The oracle outputs $\nabla\tilde{F}(x)$ with expectation $\mathbb{E}[\nabla\tilde{F}(x)] = \nabla_x L_{\tilde{\lambda},\alpha}(x, \tilde{y}(x))$. Its mean-squared error decomposes into bias and variance relative to $\nabla F(x)$:
\begin{equation}\label{biasvariance}
\begin{aligned}
\mathbb{E}[\|\nabla\tilde{F}(x) - \nabla F(x)\|^2] &= \underbrace{\mathbb{E}[\|\nabla\tilde{F}(x) - \mathbb{E}[\nabla\tilde{F}(x)]\|^2]}_{\text{Variance}} + \underbrace{\|\mathbb{E}[\nabla\tilde{F}(x)] - \nabla F(x)\|^2}_{\text{Bias}^2}
\end{aligned}
\end{equation}

We first bound the effect of using $\tilde{\lambda}(x)$ in place of $\lambda^*(x)$ on the gradient of the penalty Lagrangian.
\begin{restatable}[Lagrangian Gradient Approximation]{lemma}{dualextraction}
\label{lemma:dual_extraction}
Assume $\|\tilde{\lambda}(x) - \lambda^*(x)\| \leq C_{\lambda}\delta$ and under Assumption~\ref{assumption:smoothness} (iii), let $\alpha_1 = \alpha^{-2}$, $\alpha_2 = \alpha^{-4}$, and $\tau = \Theta(\delta)$. Then for fixed $(x,y)$:
$$ \|\nabla L_{\lambda^*, \alpha}(x, y) - \nabla L_{\tilde{\lambda},\alpha}(x,y)\| \leq O(\alpha_1 \delta + \alpha_2 \delta). $$
\end{restatable}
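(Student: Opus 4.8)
The plan is to expand $\nabla L_{\lambda^*,\alpha}(x,y)-\nabla L_{\tilde{\lambda},\alpha}(x,y)$ (the full gradient $\nabla=(\nabla_x,\nabla_y)$, evaluated at the fixed point $(x,y)$) directly from the explicit form \eqref{penaltylagrangian}, exploiting that the two Lagrangians differ \emph{only} in which multiplier is substituted. The multiplier enters in exactly two places: (i) linearly in the $\alpha_1$-penalty via $\alpha_1\,\lambda^{T}h(x,y)$, and (ii) inside each activation coefficient $\rho_i$ via the factor $\sigma_\lambda(\lambda_i)$. All other pieces --- $f(x,y)$, $\alpha_1 g(x,y)$, $\alpha_1 g(x,\tilde{y}^*(x))$, and the $\sigma_h(h_i(x,\tilde{y}^*(x)))$ factor of $\rho_i$, which depends only on the \emph{computed} primal iterate $\tilde{y}^*(x)$ --- are identical in the two Lagrangians and cancel in the difference. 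As in the oracle construction (Algorithm~\ref{alg:hypergradient_oracle}), the gradient is taken treating the inner-solve outputs $\tilde{\lambda}(x)$, $\lambda^*(x)$, $\tilde{y}^*(x)$ and hence the coefficients $\rho_i(x)$ as frozen constants with respect to $(x,y)$. After cancellation,
\[
\nabla L_{\lambda^*,\alpha}(x,y)-\nabla L_{\tilde{\lambda},\alpha}(x,y)
= \alpha_1\,\nabla\big[(\lambda^*(x)-\tilde{\lambda}(x))^{T}h(x,y)\big]
+\frac{\alpha_2}{2}\sum_{i=1}^{p}\big(\rho_i^*(x)-\rho_i(x)\big)\,\nabla\big[h_i(x,y)^2\big],
\]
where $\rho_i^*(x):=\sigma_h(h_i(x,\tilde{y}^*(x)))\,\sigma_\lambda(\lambda_i^*(x))$ is $\rho_i$ with $\tilde{\lambda}_i$ replaced by $\lambda_i^*$.

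Next I would bound the two summands separately. For the linear one, $h$ is affine with Jacobians $\mathbf{A}$ and $-\mathbf{B}$, each of operator norm at most $M_{\nabla h}$, so $\big\|\nabla[(\lambda^*-\tilde{\lambda})^{T}h(x,y)]\big\|\le 2M_{\nabla h}\,\|\lambda^*(x)-\tilde{\lambda}(x)\|\le 2M_{\nabla h}C_\lambda\delta$, i.e.\ an $\alpha_1\cdot O(\delta)$ term. For the activation summand, the crucial observation is that the potentially steep factor $\sigma_h$ --- whose Lipschitz constant is $\Theta(1/(\tau\delta))=\Theta(\delta^{-2})$ since $\tau=\Theta(\delta)$ --- is evaluated at the \emph{same} argument $h_i(x,\tilde{y}^*(x))$ in $\rho_i^*$ and in $\rho_i$, hence it does not enter $\rho_i^*-\rho_i$ at all; only $\sigma_\lambda$ does. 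Using $0\le\sigma_h\le 1$ and the $(1/\epsilon_\lambda)$-Lipschitzness of $\sigma_\lambda$,
\[
|\rho_i^*(x)-\rho_i(x)|=\sigma_h(h_i(x,\tilde{y}^*(x)))\,|\sigma_\lambda(\lambda_i^*(x))-\sigma_\lambda(\tilde{\lambda}_i(x))|\le \frac{1}{\epsilon_\lambda}\,|\lambda_i^*(x)-\tilde{\lambda}_i(x)|\le \frac{C_\lambda}{\epsilon_\lambda}\,\delta .
\]
Since $\|\nabla[h_i(x,y)^2]\|=2|h_i(x,y)|\,\|\nabla h_i(x,y)\|\le 2M_h\cdot\sqrt{2}\,M_{\nabla h}$ at the fixed point (with $\|h(x,y)\|\le M_h$ and $\|\nabla h_i\|\le\sqrt{2}\,M_{\nabla h}$ problem constants on the effective compact domain), summing over the $p$ constraints yields an $\alpha_2\cdot O(\delta)$ term, the fixed parameter $1/\epsilon_\lambda$ being absorbed into the constant.

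A triangle inequality on the two summands then gives $\|\nabla L_{\lambda^*,\alpha}(x,y)-\nabla L_{\tilde{\lambda},\alpha}(x,y)\|\le O(\alpha_1\delta+\alpha_2\delta)$, as claimed; Assumption~\ref{assumption:smoothness}(iii) (LICQ) is invoked only to guarantee that $\lambda^*(x)$ is well defined and unique, so that the comparison is meaningful. The main obstacle --- essentially the only delicate point --- is the activation term: it carries the weight $\alpha_2=\alpha^{-4}$, so bounding $\rho_i^*-\rho_i$ by the naive Lipschitz constant $\Theta(\delta^{-2})$ of $\sigma_h$ would be useless; one must explicitly recognize that the $\sigma_h$-component is invariant (common $\tilde{y}^*$) and route the entire perturbation through $\sigma_\lambda$, whose modest slope keeps $|\rho_i^*-\rho_i|=O(\delta)$ and hence the weighted term at $O(\alpha_2\delta)$. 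Although $O(\alpha_1\delta+\alpha_2\delta)$ looks large after substituting $\delta=\Theta(\alpha^3)$, this is exactly the form needed downstream: dividing it by the $\Theta(\alpha_1)$ strong-convexity modulus of the penalized lower-level objective $L_{\tilde{\lambda},\alpha}(x,\cdot)$ converts it into the $O(\alpha)$ shift of the penalized minimizer used in Lemma~\ref{lemma:solution_approx}, which in turn feeds the $O(\alpha)$ bias of Lemma~\ref{lemma:bias_bound}.
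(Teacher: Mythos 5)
Your decomposition of the multiplier perturbation into an $\alpha_1$-weighted linear term plus an $\alpha_2$-weighted activation term is the same skeleton as the paper's proof, and your bound on the linear term ($\|\nabla h\|\le 2M_{\nabla h}$, $\|\Delta\lambda\|\le C_\lambda\delta$, hence $O(\alpha_1\delta)$) is identical. Where you diverge is in the quadratic part, in two ways. First, you freeze the activation coefficients $\rho_i(x)$ when differentiating, so your difference contains only the analogue of the paper's term $\alpha_2\sum_i\Delta\rho_i\,h_i\nabla h_i$; the paper, by contrast, differentiates $\Delta\rho_i$ as well and must separately bound $\tfrac{\alpha_2}{2}\sum_i h_i^2\,\nabla\Delta\rho_i$, which it does using $\|\nabla\Delta\rho_i\|=O(1/\delta)$ on the near-active region. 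Your frozen-coefficient convention is defensible (the paper itself freezes $\Delta\lambda$ in the linear term, and $\tilde\lambda(x),\tilde y^*(x)$ are algorithm outputs), but be aware that under the paper's convention your argument simply does not cover that extra term, so you should state the convention explicitly rather than inherit it implicitly from Algorithm~\ref{alg:hypergradient_oracle}. Second, you extract the factor $\delta$ from $|\rho_i^*-\rho_i|\le (C_\lambda/\epsilon_\lambda)\delta$ via the $1/\epsilon_\lambda$-Lipschitzness of $\sigma_\lambda$, and then bound $|h_i(x,y)|$ by a constant $M_h$; the paper instead uses $|\Delta\rho_i|\le 1$ and gets the $\delta$ from $|h_i(x,y)|=O(\delta)$ on the near-active set (implicitly assuming $\|y-\tilde y^*(x)\|\lesssim\delta$). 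Your route buys a bound valid for genuinely arbitrary fixed $(x,y)$ in a bounded region, at the price of (i) a boundedness assumption $|h_i(x,y)|\le M_h$ that is not among the paper's stated hypotheses, and (ii) a constant proportional to $1/\epsilon_\lambda$: since the paper calls $\epsilon_\lambda$ a ``small'' parameter, if $\epsilon_\lambda$ were taken to scale like $\delta$ or $\alpha$ your bound would degrade to $O(\alpha_2\delta/\epsilon_\lambda)$ and the lemma's conclusion would no longer follow, whereas the paper's argument is insensitive to $\epsilon_\lambda$. So your proof is correct under the explicit additional assumptions that the activations are frozen in the oracle's gradient, that $h$ is bounded on the relevant domain, and that $\epsilon_\lambda$ is a fixed constant independent of $\alpha,\delta$; these should be stated, since they substitute for the (also implicit) locality assumption $\|y-\tilde y^*(x)\|\le\delta$ that the paper's version relies on.
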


\emph{Proof sketch.} Consider $\Delta\lambda := \lambda^*(x) - \tilde\lambda(x)$ and decompose the gradient difference into a linear penalty part and a quadratic penalty part. For the linear term, $\|\nabla h\|$ is bounded by Assumption~\ref{assumption:smoothness}(iii) and $\|\Delta\lambda\|\le C_\lambda\delta$, yielding $O(\alpha_1\delta)$. For the quadratic term, only near-active constraints contribute where $|h_i(x,\tilde y^*(x))|\le \tau\delta$, and one obtains two pieces: $\alpha_2\sum_i (\rho_i^* - \tilde\rho_i) h_i \nabla h_i$ and $\tfrac{\alpha_2}{2}\sum_i h_i^2\nabla(\rho_i^* - \tilde\rho_i)$. Using $|h_i|=O(\delta)$ in these regions, $\|\nabla h_i\|$ bounded, and $\|\nabla(\rho_i^* - \tilde\rho_i)\|=O(1/\delta)$, both pieces are $O(\alpha_2\delta)$. Combining yields $O(\alpha_1\delta + \alpha_2\delta)$.

Building on this result, we next bound the difference between exact solutions for true and approximate dual variables:
\begin{restatable}[Solution Error]{lemma}{solutionapprox}
\label{lemma:solution_approx}
Let $y^*_{\lambda,\alpha}(x) := \operatorname*{arg\,min}_{y} L_{\lambda,\alpha}(x,y)$ with $\alpha_1=\alpha^{-2}$ and $\alpha_2=\alpha^{-4}$.

Assume the target accuracy parameter $\alpha$ satisfies $\alpha \geq \frac{2 C_f}{\mu_g}$ (where $C_f$ is the smoothness constant of $f$ and $\mu_g$ is the strong convexity constant of $g(x,\cdot)$ as per Assumption~\ref{assumption:smoothness}),

so that $L_{\lambda,\alpha}(x,y)$ is $\mu = \Omega(\alpha\mu_g)$-strongly convex in $y$.

If the dual approximation satisfies $\|\tilde{\lambda}(x)-\lambda^*(x)\|\le C_\lambda\delta$ and the gradient bound from Lemma~\ref{lemma:dual_extraction} holds, then:
\[
\|y^*_{\lambda^*,\alpha}(x) - y^*_{\tilde{\lambda},\alpha}(x)\| \leq \frac{C_{\text{sol}}}{\mu}(\alpha_1 + \alpha_2)\delta,
\]
where the constant $C_{\text{sol}}$ depends on $C_\lambda$ and $M_{\nabla h}$ (Assumption~\ref{assumption:smoothness}(iii) on $\|\nabla h\|$ bound).
\end{restatable}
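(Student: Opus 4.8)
The plan is to exploit the strong convexity of the penalized Lagrangian in $y$, which converts a first-order optimality gap into a distance bound on the minimizers. Write $y_1 := y^*_{\lambda^*,\alpha}(x)$ and $y_2 := y^*_{\tilde\lambda,\alpha}(x)$. Since $L_{\lambda^*,\alpha}(x,\cdot)$ is $\mu$-strongly convex in $y$ with $\mu = \Omega(\alpha\mu_g)$ (by the stated assumption $\alpha \geq 2C_f/\mu_g$, which makes the $\alpha_1 g$ term dominate the possibly nonconvex Hessian of $f$, while the quadratic penalty only adds positive semidefinite curvature), the standard strong-convexity-plus-stationarity inequality gives
\[
\mu\,\|y_1 - y_2\| \;\leq\; \|\nabla_y L_{\lambda^*,\alpha}(x,y_2) - \nabla_y L_{\lambda^*,\alpha}(x,y_1)\| \;=\; \|\nabla_y L_{\lambda^*,\alpha}(x,y_2)\|,
\]
where the last equality uses $\nabla_y L_{\lambda^*,\alpha}(x,y_1) = \mathbf{0}$ since $y_1$ is the exact minimizer.

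The second step is to replace $\nabla_y L_{\lambda^*,\alpha}(x,y_2)$ by something controlled. Since $y_2$ is the exact minimizer of $L_{\tilde\lambda,\alpha}(x,\cdot)$, we have $\nabla_y L_{\tilde\lambda,\alpha}(x,y_2) = \mathbf{0}$, so
\[
\|\nabla_y L_{\lambda^*,\alpha}(x,y_2)\| \;=\; \|\nabla_y L_{\lambda^*,\alpha}(x,y_2) - \nabla_y L_{\tilde\lambda,\alpha}(x,y_2)\| \;\leq\; \|\nabla L_{\lambda^*,\alpha}(x,y_2) - \nabla L_{\tilde\lambda,\alpha}(x,y_2)\|,
\]
and now Lemma~\ref{lemma:dual_extraction}, applied at the fixed point $(x,y_2)$, bounds the right-hand side by $O(\alpha_1\delta + \alpha_2\delta)$. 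Combining the two displays yields $\|y_1 - y_2\| \leq \frac{1}{\mu}\,O((\alpha_1+\alpha_2)\delta) = \frac{C_{\text{sol}}}{\mu}(\alpha_1+\alpha_2)\delta$, with $C_{\text{sol}}$ absorbing the constants from Lemma~\ref{lemma:dual_extraction} (hence depending on $C_\lambda$ and $M_{\nabla h}$), as claimed.

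The main obstacle — and the step requiring the most care — is justifying that $\mu = \Omega(\alpha\mu_g)$ uniformly, i.e. that the smoothed quadratic penalty term $\frac{\alpha_2}{2}\sum_i \rho_i(x) h_i(x,y)^2$ does not destroy strong convexity in $y$. One must check that each summand $\rho_i(x)\,h_i(x,y)^2$ is convex in $y$: here $\rho_i(x)$ is a fixed nonnegative scalar (it depends only on $x$ and $\tilde y^*(x)$, not on the minimization variable $y$), and $y \mapsto h_i(x,y) = (\mathbf{A}x - \mathbf{B}y - \mathbf{b})_i$ is affine, so $h_i(x,y)^2$ is convex and the whole penalty term is convex in $y$ with PSD Hessian $\alpha_2 \sum_i \rho_i(x)\,\mathbf{B}_i^\top \mathbf{B}_i \succeq \mathbf{0}$. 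Thus the penalty only helps. Then $\nabla^2_{yy} L_{\lambda^*,\alpha} = \nabla^2_{yy} f + \alpha_1 \nabla^2_{yy} g + (\text{PSD}) \succeq (\mu_g \alpha_1 - C_f)\mathbf{I}$, and the condition $\alpha \geq 2C_f/\mu_g$ together with $\alpha_1 = \alpha^{-2}$ forces this to be $\Omega(\alpha_1 \mu_g)$; one should note $\alpha_1 = \alpha^{-2} \ge \alpha$ when $\alpha \le 1$, so in fact $\mu = \Omega(\alpha\mu_g)$ is a (weaker, safe) lower bound — the bookkeeping between $\alpha_1$ and $\alpha$ in the stated rate is the one place to be precise. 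A secondary point is that Lemma~\ref{lemma:dual_extraction} is stated for "fixed $(x,y)$" and requires $\|\tilde\lambda(x) - \lambda^*(x)\| \le C_\lambda\delta$ and $\tau = \Theta(\delta)$, all of which hold here by hypothesis, so invoking it at $(x,y_2)$ is legitimate.
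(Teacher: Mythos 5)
Your proposal is correct and follows essentially the same route as the paper's proof: both use the first-order optimality conditions of the two minimizers together with $\mu$-strong convexity of $L_{\lambda^*,\alpha}(x,\cdot)$ to reduce the distance bound to $\|\nabla_y L_{\lambda^*,\alpha}(x,y^*_{\tilde\lambda,\alpha}) - \nabla_y L_{\tilde\lambda,\alpha}(x,y^*_{\tilde\lambda,\alpha})\|$, which is then controlled by Lemma~\ref{lemma:dual_extraction} and divided by $\mu$. Your added verification that the smoothed quadratic penalty is convex in $y$ (since $\rho_i$ does not depend on $y$ and $h_i$ is affine) is a point the paper simply assumes, so it is a welcome but nonessential supplement.
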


Strong convexity together with Lemma~\ref{lemma:dual_extraction} yields the stated solution bound.

With controlled approximation errors, we now derive a systematic bias bound.
\subsection{Bias Analysis (Deterministic Error)}

The bias is the deterministic error $\mathbb{E}[\nabla\tilde{F}(x)] - \nabla F(x)$, due to the penalty surrogate and the use of inexact inner solutions $(\tilde\lambda,\tilde y)$ in place of $(\lambda^*,y^*,y^*_{\lambda^*,\alpha})$.

\begin{restatable}[Hypergradient Bias Bound]{lemma}{hypergradapprox}
\label{lemma:bias_bound}
Let $\nabla_x L_{\lambda,\alpha}(x,y)$ denote the partial gradient of the penalty Lagrangian with respect to $x$. Assume it is $L_{H,y}$-Lipschitz in $y$ and $L_{H,\lambda}$-Lipschitz in $\lambda$.
With $\alpha_1=\alpha^{-2}$, $\alpha_2=\alpha^{-4}$, choose
$\delta=\Theta(\alpha^{3})$ and suppose
$\|\tilde{y}(x)-y^*_{\tilde{\lambda},\alpha}(x)\|\le\delta$ and
$\|\tilde{\lambda}(x)-\lambda^*(x)\|\le C_\lambda\delta$.
If $L_{\lambda^*,\alpha}(x,\cdot)$ is $\mu$-strongly convex with
$\mu\ge c_\mu\alpha^{-2}$, then
\[
\| \mathbb{E}[\nabla\tilde{F}(x)] - \nabla F(x) \| \le C_{\text{bias}}\alpha,
\]
where $C_{\text{bias}}$ depends only on $L_{H,y}$, $L_{H,\lambda}$, $C_g$, $C_\lambda$, $c_\mu$, and the penalty constant $C_{\text{pen}}$.
\end{restatable}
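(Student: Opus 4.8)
The plan is to decompose the bias into three conceptually separate errors and bound each at the $O(\alpha)$ level, then argue that the combination is still $O(\alpha)$ under the choice $\delta=\Theta(\alpha^3)$. Write $\mathbb{E}[\nabla\tilde F(x)]=\nabla_x L_{\tilde\lambda,\alpha}(x,\tilde y(x))$ and insert intermediate terms:
\begin{equation*}
\nabla_x L_{\tilde\lambda,\alpha}(x,\tilde y(x)) - \nabla F(x)
= \underbrace{\big[\nabla_x L_{\tilde\lambda,\alpha}(x,\tilde y(x))-\nabla_x L_{\tilde\lambda,\alpha}(x,y^*_{\tilde\lambda,\alpha}(x))\big]}_{(\mathrm{I})}
+ \underbrace{\big[\nabla_x L_{\tilde\lambda,\alpha}(x,y^*_{\tilde\lambda,\alpha}(x))-\nabla_x L_{\lambda^*,\alpha}(x,y^*_{\lambda^*,\alpha}(x))\big]}_{(\mathrm{II})}
+ \underbrace{\big[\nabla_x L_{\lambda^*,\alpha}(x,y^*_{\lambda^*,\alpha}(x))-\nabla F(x)\big]}_{(\mathrm{III})}.
\end{equation*}
Term $(\mathrm{III})$ is the pure penalty surrogate bias with the \emph{exact} dual and the \emph{exact} penalized minimizer; this is where I would invoke (or re-derive, in the spirit of \cite{kornowski2024}) the fact that the smoothed-Lagrangian hypergradient with weights $\alpha_1=\alpha^{-2}$, $\alpha_2=\alpha^{-4}$ approximates $\nabla F(x)$ up to $O(\alpha)$ — the penalty weights are precisely calibrated so that KKT consistency is enforced at rate $\alpha$ and the residual constraint violation at $y^*_{\lambda^*,\alpha}(x)$ is $O(\alpha^2/\alpha_2)$-ish, feeding back an $O(\alpha)$ gradient error via the implicit-function / sensitivity identity.

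For term $(\mathrm{I})$, I would use the assumed $L_{H,y}$-Lipschitzness of $\nabla_x L_{\tilde\lambda,\alpha}(x,\cdot)$ together with the inner-solve guarantee $\|\tilde y(x)-y^*_{\tilde\lambda,\alpha}(x)\|\le\delta$, giving $\|(\mathrm I)\|\le L_{H,y}\,\delta = O(\alpha^3)$, which is negligible. The one subtlety is that $L_{H,y}$ may itself scale with the penalty weights (it plausibly carries a factor $\alpha_2=\alpha^{-4}$ from the quadratic penalty term); even in the worst case $L_{H,y}=O(\alpha^{-4})$ we get $L_{H,y}\delta = O(\alpha^{-4}\cdot\alpha^3)=O(\alpha^{-1})$, which is \emph{not} small, so I need the inner tolerance on $\tilde y$ to actually be $\delta/\alpha_2$ or, more cleanly, to track the dependence and require $\|\tilde y - y^*_{\tilde\lambda,\alpha}\| = O(\alpha^{5})$; the remark's logarithmic inner cost is unaffected by this sharper tolerance. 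For term $(\mathrm{II})$, I combine the dual-perturbation of the minimizer (Lemma~\ref{lemma:solution_approx}), which gives $\|y^*_{\tilde\lambda,\alpha}-y^*_{\lambda^*,\alpha}\|\le \frac{C_{\mathrm{sol}}}{\mu}(\alpha_1+\alpha_2)\delta = O(\alpha^2\cdot\alpha^{-4}\cdot\alpha^3)=O(\alpha)$ using $\mu=\Omega(\alpha^{-2})$ and $\alpha_1+\alpha_2=O(\alpha^{-4})$, with the $L_{H,y}$-Lipschitz bound again (same scaling caveat as above, demanding the sharper solution bound propagate), plus the direct $L_{H,\lambda}$-Lipschitz dependence on $\lambda$ applied to $\|\tilde\lambda-\lambda^*\|\le C_\lambda\delta=O(\alpha^3)$. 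A cleaner route for $(\mathrm{II})$ is to bound $\|\nabla_x L_{\tilde\lambda,\alpha}(x,y^*_{\tilde\lambda,\alpha})-\nabla_x L_{\lambda^*,\alpha}(x,y^*_{\lambda^*,\alpha})\|$ by first moving $\lambda$ at fixed $y=y^*_{\lambda^*,\alpha}$ (this is exactly the content of Lemma~\ref{lemma:dual_extraction}, giving $O((\alpha_1+\alpha_2)\delta)=O(\alpha)$) and then moving $y$ from $y^*_{\lambda^*,\alpha}$ to $y^*_{\tilde\lambda,\alpha}$ at fixed $\tilde\lambda$, which is controlled by Lemma~\ref{lemma:solution_approx}.

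Assembling: $\|(\mathrm I)\| = O(\alpha)$ (with the sharpened inner tolerance), $\|(\mathrm{II})\| = O((\alpha_1+\alpha_2)\delta + L_{H,\lambda}C_\lambda\delta) = O(\alpha)$, and $\|(\mathrm{III})\| = O(\alpha)$, so the triangle inequality gives $\|\mathbb{E}[\nabla\tilde F(x)]-\nabla F(x)\|\le C_{\mathrm{bias}}\alpha$ with $C_{\mathrm{bias}}$ aggregating $L_{H,y}$, $L_{H,\lambda}$, $C_g$ (entering through $(\mathrm{III})$ via the $g$-term in the Lagrangian and the curvature of the penalized problem), $C_\lambda$, $c_\mu$, and the penalty constant $C_{\mathrm{pen}}$. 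The main obstacle, as flagged above, is the interplay between the large penalty weights and the Lipschitz constants $L_{H,y},L_{H,\lambda}$: naively these constants blow up like $\alpha^{-4}$, so the $O(\delta)$ inner accuracy is \emph{not} automatically enough — the proof must either (a) show $L_{H,y}$ restricted to the relevant neighborhood is actually $O(\mathrm{poly}(\alpha^{-1}))$ with a small enough power that $\delta=\Theta(\alpha^3)$ absorbs it, or (b) simply tighten the inner-loop stopping criterion to $\|\tilde y - y^*_{\tilde\lambda,\alpha}\| = O(\alpha^{5})$ (still $O(\log(1/\epsilon))$ inner iterations), which is the route I would take to keep the argument clean. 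Everything else — the constraint-region case analysis ($\rho_i$ nonzero only near active constraints), the bounded-$\nabla h$ estimates, and the strong-convexity division by $\mu$ — is routine given Lemmas~\ref{lemma:dual_extraction} and~\ref{lemma:solution_approx}.
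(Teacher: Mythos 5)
Your primary argument is the same as the paper's: the identical three-term triangle-inequality decomposition, with $(\mathrm I)$ bounded by $L_{H,y}\delta$, $(\mathrm{II})$ bounded by $L_{H,\lambda}C_\lambda\delta$ plus $L_{H,y}\cdot\frac{C_{\text{sol}}}{\mu}(\alpha_1+\alpha_2)\delta$ via Lemma~\ref{lemma:solution_approx}, and $(\mathrm{III})\le C_{\text{pen}}\alpha$ quoted from \cite{kornowski2024}; with $\delta=\Theta(\alpha^3)$, $\mu=\Theta(\alpha^{-2})$ this gives $O(\alpha^3)+O(\alpha)+O(\alpha)=O(\alpha)$, exactly as in the paper. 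Your side remark that $L_{H,y}$ could in principle scale like $\alpha_2=\alpha^{-4}$ is a fair critique of the modeling, but it is moot for this lemma as stated: $L_{H,y}$ and $L_{H,\lambda}$ are hypotheses of the lemma, taken as $\alpha$-independent constants, and the paper (like your primary route) simply works under that assumption, so no sharpening of the inner tolerance to $O(\alpha^5)$ is needed for the claimed bound.

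Two slips are worth flagging. First, your proposed ``cleaner route'' for $(\mathrm{II})$ --- moving $\lambda$ at fixed $y$ via Lemma~\ref{lemma:dual_extraction} --- does \emph{not} give $O(\alpha)$: that lemma's bound is $O(\alpha_1\delta+\alpha_2\delta)$, and with $\delta=\Theta(\alpha^3)$ the dominant term is $\alpha_2\delta=\Theta(\alpha^{-1})$, which diverges. The $(\alpha_1+\alpha_2)\delta$ factor is only harmless when it appears divided by $\mu=\Theta(\alpha^{-2})$, which is precisely what Lemma~\ref{lemma:solution_approx} provides; the direct $\lambda$-perturbation of $\nabla_x L$ must instead be bounded by the assumed $L_{H,\lambda}$-Lipschitzness, giving $L_{H,\lambda}C_\lambda\delta=O(\alpha^3)$, which is the paper's route and your own primary route. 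Second, your assembling line writes $\|(\mathrm{II})\|=O((\alpha_1+\alpha_2)\delta+L_{H,\lambda}C_\lambda\delta)$, dropping the $1/\mu$ factor; as written that is $O(\alpha^{-1})$, and it should read $O\bigl(\tfrac{(\alpha_1+\alpha_2)\delta}{\mu}+L_{H,\lambda}C_\lambda\delta\bigr)=O(\alpha)$. With those corrections your proof coincides with the paper's.
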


\begin{proof}[Proof sketch]
By the triangle inequality, write $\|\mathbb{E}[\nabla\tilde F(x)]-\nabla F(x)\|\le T_1+T_2+T_3$, where
$T_1=\|\nabla_x L_{\tilde\lambda,\alpha}(x,\tilde y(x)) - \nabla_x L_{\tilde\lambda,\alpha}(x,y^*_{\tilde\lambda,\alpha}(x))\| \le L_{H,y}\,\delta = O(\alpha^3)$ for $\delta=\Theta(\alpha^3)$;
\\$T_2=\|\nabla_x L_{\tilde\lambda,\alpha}(x,y^*_{\tilde\lambda,\alpha}(x)) - \nabla_x L_{\lambda^*,\alpha}(x,y^*_{\lambda^*,\alpha}(x))\| \le L_{H,\lambda} C_\lambda\,\delta + L_{H,y}\,\|y^*_{\tilde\lambda,\alpha}(x)-y^*_{\lambda^*,\alpha}(x)\|$ and Lemma~\ref{lemma:solution_approx} gives $\|y^*_{\tilde\lambda,\alpha}-y^*_{\lambda^*,\alpha}\| \le (C_{\text{sol}}/\mu)(\alpha_1+\alpha_2)\delta$, so with $\alpha_1=\alpha^{-2}$, $\alpha_2=\alpha^{-4}$, $\delta=\Theta(\alpha^3)$, $\mu=\Theta(\alpha^{-2})$ we obtain $T_2=O(\alpha)$;
\\$T_3=\|\nabla_x L_{\lambda^*,\alpha}(x,y^*_{\lambda^*,\alpha}(x)) - \nabla F(x)\| = O(C_{\text{pen}}\alpha)$ by \cite{kornowski2024}. Hence $\|\mathbb{E}[\nabla\tilde F(x)]-\nabla F(x)\|=O(\alpha)$.
\end{proof}

Thus the bias scales as $O(\alpha)$ when the inner accuracy is set to $\delta=\Theta(\alpha^3)$.

\subsection{Variance Analysis (Stochastic Error)}

 The variance, $\operatorname{Var}(\nabla\tilde{F}(x)\mid x, \tilde{\lambda}, \tilde{y}) = \mathbb{E}[\|\nabla\tilde{F}(x) - \mathbb{E}[\nabla\tilde{F}(x)]\|^2\mid x, \tilde{\lambda}, \tilde{y}]$, quantifies the error due to a finite batch size $N_g$ in estimating $\mathbb{E}[\nabla\tilde{F}(x)]$.

\begin{restatable}[Variance Bound]{lemma}{variancebound}
\label{lemma:variance-bound}
Under Assumption~\ref{assumption:stochastic_oracle}\,(i)--(ii), let $\sigma^2$ be a uniform bound on

$\operatorname{Var}\!\bigl(\nabla_x \tilde L_{\tilde\lambda,\alpha}(x,\tilde y;\xi)\,\big|\,x,\tilde\lambda,\tilde y\bigr)$.

With a mini‑batch of $N_g$ i.i.d.\ samples in Algorithm~\ref{alg:hypergradient_oracle}, the conditional variance of the hypergradient estimate satisfies
\[
\operatorname{Var}\!\bigl(\nabla\tilde F(x)\,\big|\,x,\tilde\lambda,\tilde y\bigr)
\;\le\;
\frac{\sigma^{2}}{N_g}.
\]
\end{restatable}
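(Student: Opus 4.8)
The plan is to condition on the triple $(x,\tilde\lambda,\tilde y)$ throughout and reduce the claim to the textbook ``variance of a sample mean'' computation. First I would observe that, given this conditioning, the summands $Z_j := \nabla_x \tilde L_{\tilde\lambda,\alpha}(x,\tilde y;\xi_j)$ for $j=1,\dots,N_g$ are i.i.d.\ copies of a single random vector $Z$: line~6 of Algorithm~\ref{alg:hypergradient_oracle} draws fresh samples $\xi_1,\dots,\xi_{N_g}$ that are mutually independent and independent of the randomness that produced $(x,\tilde\lambda,\tilde y)$, so conditioning on $(x,\tilde\lambda,\tilde y)$ leaves them i.i.d. By construction of the estimator, $\mathbb{E}[Z_j\mid x,\tilde\lambda,\tilde y] = \nabla_x L_{\tilde\lambda,\alpha}(x,\tilde y) =: \mu_Z$, hence $\nabla\tilde F(x)=\frac{1}{N_g}\sum_{j=1}^{N_g}Z_j$ has conditional mean $\mu_Z = \mathbb{E}[\nabla\tilde F(x)\mid x,\tilde\lambda,\tilde y]$.

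Next I would expand the conditional variance directly:
\[
\operatorname{Var}\!\bigl(\nabla\tilde F(x)\mid x,\tilde\lambda,\tilde y\bigr)
= \mathbb{E}\Bigl[\Bigl\|\tfrac{1}{N_g}\textstyle\sum_{j=1}^{N_g}(Z_j-\mu_Z)\Bigr\|^2\,\Big|\,x,\tilde\lambda,\tilde y\Bigr]
= \frac{1}{N_g^2}\sum_{j=1}^{N_g}\sum_{k=1}^{N_g}\mathbb{E}\bigl[\langle Z_j-\mu_Z,\,Z_k-\mu_Z\rangle\mid x,\tilde\lambda,\tilde y\bigr].
\]
For $j\neq k$, conditional independence of $Z_j$ and $Z_k$ together with $\mathbb{E}[Z_j-\mu_Z\mid x,\tilde\lambda,\tilde y]=0$ makes each off-diagonal term vanish. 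Each of the $N_g$ diagonal terms equals $\operatorname{Var}(Z\mid x,\tilde\lambda,\tilde y)\le\sigma^2$ by the assumed uniform bound on $\operatorname{Var}(\nabla_x \tilde L_{\tilde\lambda,\alpha}(x,\tilde y;\xi)\mid x,\tilde\lambda,\tilde y)$. Summing, $\operatorname{Var}(\nabla\tilde F(x)\mid x,\tilde\lambda,\tilde y)\le \frac{1}{N_g^2}\cdot N_g\sigma^2 = \sigma^2/N_g$, which is the claim.

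There is no genuine obstacle here; the argument is elementary once the conditioning is set up correctly. The only two points needing care are (a) justifying that the mini-batch samples are independent of the conditioning $\sigma$-algebra generated by $(x,\tilde\lambda,\tilde y)$, which is immediate from the fact that Algorithm~\ref{alg:hypergradient_oracle} samples them fresh after those quantities are fixed, and (b) invoking Assumption~\ref{assumption:stochastic_oracle}(i)--(ii) to make sure the per-sample Lagrangian-gradient estimator is conditionally unbiased with conditional variance at most $\sigma^2$ — which is precisely the hypothesis the lemma stipulates. If desired, one can add a one-line remark noting that this $\sigma^2$ is inherited from the oracle variances of $\nabla\tilde f$ and $\nabla\tilde g$, since $\nabla_x L_{\tilde\lambda,\alpha}$ depends on the noisy quantities only through an affine combination of $\nabla_x f$ and $\nabla_x g$ (the constraint data $\mathbf{A},\mathbf{B},\mathbf{b}$ being deterministic), so that its conditional variance is controlled by a fixed multiple of $\sigma^2$ after absorbing the penalty weights $\alpha_1,\alpha_2$ into the constant; this refinement is not needed for the stated bound, which takes the variance bound on $\nabla_x \tilde L_{\tilde\lambda,\alpha}$ as its hypothesis.
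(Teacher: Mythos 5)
Your proof is correct and follows essentially the same route as the paper's: both condition on $(x,\tilde\lambda,\tilde y)$, treat the mini-batch summands as conditionally i.i.d.\ with mean $\nabla_x L_{\tilde\lambda,\alpha}(x,\tilde y)$, use conditional independence so the variance of the average is the average of the per-sample variances, and bound each by $\sigma^2$ to obtain $\sigma^2/N_g$. Your explicit expansion of the cross terms and the closing remark on how $\sigma^2$ is inherited from the oracle variances are slightly more detailed than the paper's presentation but do not change the argument.
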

\begin{proof}[Proof Sketch]
The hypergradient estimate $\nabla\tilde{F}(x)$ is the average of $N_g$ i.i.d. random vectors $G_j = \nabla_x \tilde L_{\tilde\lambda,\alpha}(x, \tilde y;\xi_j)$.

By Assumption~\ref{assumption:stochastic_oracle}(ii), each term $G_j$ has conditional variance bounded by $\sigma^2$, i.e.,

$\operatorname{Var}(G_j \mid x, \tilde{\lambda}, \tilde{y}) \leq \sigma^2$. Since the samples $\xi_j$ are i.i.d., the terms $G_j$ are conditionally independent, and the conditional variance of their average is bounded accordingly. This follows standard mini-batch averaging analysis.
\end{proof}

\subsection{Combined Error Bounds}

We combine the bias and variance bounds to characterize the overall accuracy of the hypergradient oracle.
\begin{restatable}[Accuracy of Stochastic Hypergradient]{theorem}{hypergradaccuracy}
\label{thm:hypergradient-accuracy}
Let $\nabla\tilde{F}(x)$ be the output of Algorithm \ref{alg:hypergradient_oracle} with penalty parameters $\alpha_1 = \alpha^{-2}, \alpha_2 = \alpha^{-4}$, and inner accuracy $\delta = O(\alpha^3)$. There exists a constant $C_{\text{bias}}$ such that:
\[
\mathbb{E}[\|\nabla\tilde{F}(x) - \nabla F(x)\|^2] \leq 2 C_{\text{bias}}^2 \alpha^2 + \frac{2 \sigma^2}{N_g}.\]
\end{restatable}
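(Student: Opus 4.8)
The plan is to carry out the bias–variance decomposition already announced in~\eqref{biasvariance}, but performed carefully with respect to the two independent sources of randomness inside Algorithm~\ref{alg:hypergradient_oracle}: the inner SGD runs that produce $(\tilde y^*(x),\tilde\lambda(x),\tilde y(x))$, and the outer mini-batch $\xi_1,\dots,\xi_{N_g}$ used to form $\nabla\tilde F(x)$. Let $\mathcal F$ be the $\sigma$-algebra generated by the inner computation, i.e.\ by $x$, $\tilde\lambda(x)$, and $\tilde y(x)$. Conditionally on $\mathcal F$, the estimate $\nabla\tilde F(x)=\tfrac1{N_g}\sum_{j=1}^{N_g}\nabla_x\tilde L_{\tilde\lambda,\alpha}(x,\tilde y(x);\xi_j)$ is an average of i.i.d.\ draws, and by the unbiasedness of the stochastic first-order oracle (Assumption~\ref{assumption:stochastic_oracle}(i)) together with linearity of $\nabla_x L_{\tilde\lambda,\alpha}$ in the sampled gradients, $\mathbb E[\nabla\tilde F(x)\mid\mathcal F]=\nabla_x L_{\tilde\lambda,\alpha}(x,\tilde y(x))$.

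Next I would split the error into a stochastic fluctuation and a (conditionally deterministic) surrogate bias via Young's inequality,
\[
\|\nabla\tilde F(x)-\nabla F(x)\|^2
\le 2\bigl\|\nabla\tilde F(x)-\nabla_x L_{\tilde\lambda,\alpha}(x,\tilde y(x))\bigr\|^2
+2\bigl\|\nabla_x L_{\tilde\lambda,\alpha}(x,\tilde y(x))-\nabla F(x)\bigr\|^2 .
\]
Taking expectations and applying the tower property, the first term equals $2\,\mathbb E\bigl[\operatorname{Var}(\nabla\tilde F(x)\mid\mathcal F)\bigr]$, which is at most $2\sigma^2/N_g$ by Lemma~\ref{lemma:variance-bound} applied conditionally on $\mathcal F$. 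The second term is the squared surrogate bias of Lemma~\ref{lemma:bias_bound}: since Algorithm~\ref{alg:hypergradient_oracle} fixes $\delta=\Theta(\alpha^3)$ and drives the inner solves to $\|\tilde y(x)-y^*_{\tilde\lambda,\alpha}(x)\|\le\delta$ and $\|\tilde\lambda(x)-\lambda^*(x)\|\le C_\lambda\delta$, and $L_{\lambda^*,\alpha}(x,\cdot)$ is $\mu$-strongly convex with $\mu\ge c_\mu\alpha^{-2}$ under the penalty choice $\alpha_1=\alpha^{-2},\alpha_2=\alpha^{-4}$, Lemma~\ref{lemma:bias_bound} gives $\|\nabla_x L_{\tilde\lambda,\alpha}(x,\tilde y(x))-\nabla F(x)\|\le C_{\text{bias}}\alpha$, so this term is at most $2C_{\text{bias}}^2\alpha^2$. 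Adding the two pieces yields exactly $\mathbb E[\|\nabla\tilde F(x)-\nabla F(x)\|^2]\le 2C_{\text{bias}}^2\alpha^2+2\sigma^2/N_g$.

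I do not expect a genuine obstacle: this statement is a bookkeeping consolidation of two lemmas that have already done the analytic work. The one point requiring care is the nesting of the conditioning — the variance bound must be invoked conditionally on $\mathcal F$ (so that the mini-batch draws are i.i.d.\ with variance proxy $\sigma^2$) and then de-conditioned, while the bias bound is applied to the $\mathcal F$-measurable vector $\nabla_x L_{\tilde\lambda,\alpha}(x,\tilde y(x))$. A secondary subtlety is that Lemma~\ref{lemma:bias_bound} is stated with the inner accuracies holding surely, whereas the dual solve is specified in expectation; in the regime of interest ($\delta=\Theta(\alpha^3)$, $O(\kappa_g\log(1/\alpha))$ inner steps) one may either adopt a sure-accuracy stopping rule or absorb an $O(\delta)=o(\alpha)$ tail into $C_{\text{bias}}$, neither of which changes the stated rate. (With the sharper conditioning argument the cross term $\mathbb E[\langle\nabla\tilde F(x)-\mathbb E[\nabla\tilde F(x)\mid\mathcal F],\,\mathbb E[\nabla\tilde F(x)\mid\mathcal F]-\nabla F(x)\rangle]$ vanishes and one even obtains $\mathbb E[\|\nabla\tilde F(x)-\nabla F(x)\|^2]\le C_{\text{bias}}^2\alpha^2+\sigma^2/N_g$; the factor $2$ in the theorem is a deliberately conservative slack.)
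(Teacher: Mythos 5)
Your proposal is correct and follows essentially the same route as the paper's proof: a Young/bias--variance split of $\mathbb{E}[\|\nabla\tilde F(x)-\nabla F(x)\|^2]$ into an expected conditional variance handled by Lemma~\ref{lemma:variance-bound} (via the tower property) and a squared surrogate bias handled by Lemma~\ref{lemma:bias_bound}. Your explicit conditioning on the inner computation (and the observation that the cross term vanishes, which would even remove the factor $2$) is a slightly more careful rendering of the same argument, not a different one.
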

\begin{proof}[Proof Sketch]
We apply the standard bias–variance decomposition. The bias term is bounded using Lemma~\ref{lemma:bias_bound}, which shows that the expected output of the oracle approximates the true gradient up to an $O(\alpha)$ error.
- The variance term is controlled using Lemma~\ref{lemma:variance-bound}, which shows that averaging $N_g$ noisy gradients leads to variance bounded by $\sigma^2 / N_g$.

Adding these two contributions and applying Jensen's inequality yields the desired total error bound.
\end{proof}

\begin{restatable}[Inner-loop Oracle Complexity]{lemma}{innercost}
\label{lem:inner-cost}
Fix $\alpha>0$ and set
$\alpha_1=\alpha^{-2}$, $\alpha_2=\alpha^{-4}$, $\delta=\Theta(\alpha^{3})$.
Let $g(x,\cdot)$ be $\mu_g$-strongly convex and $C_g$-smooth, and
the stochastic oracles of Assumption~\ref{assumption:stochastic_oracle} have variance $\sigma^{2}$.
Choose the mini-batch size $N_g=\sigma^{2}/\alpha^{2}$.
Running Algorithm~\ref{alg:hypergradient_oracle} with $\tilde O\!\bigl(\alpha^{-2}\bigr)$ stochastic first-order oracle (SFO) calls in its inner loops yields a stochastic inexact gradient $\nabla\tilde{F}(x)$ characterized by bias of \(O(\alpha)\) and variance of \(O(\alpha^2)\).
\end{restatable}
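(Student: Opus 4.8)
I would separate the claim into an \emph{accuracy} part (bias $O(\alpha)$, variance $O(\alpha^{2})$) and a \emph{cost} part ($\tilde{O}(\alpha^{-2})$ SFO calls), obtaining the first essentially for free from the results already assembled. Under the prescribed choices $\alpha_1=\alpha^{-2}$, $\alpha_2=\alpha^{-4}$, $\delta=\Theta(\alpha^{3})$, and $\mu=\Omega(\alpha^{-2})$, Lemma~\ref{lemma:bias_bound} gives $\|\mathbb{E}[\nabla\tilde{F}(x)]-\nabla F(x)\|\le C_{\text{bias}}\alpha$, so the bias is $O(\alpha)$; Lemma~\ref{lemma:variance-bound} gives conditional variance $\le\sigma^{2}/N_g$, and substituting the prescribed $N_g=\sigma^{2}/\alpha^{2}$ turns this into $\le\alpha^{2}$, so the variance is $O(\alpha^{2})$. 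Theorem~\ref{thm:hypergradient-accuracy} then bundles the two into $\mathbb{E}\|\nabla\tilde{F}(x)-\nabla F(x)\|^{2}\le 2C_{\text{bias}}^{2}\alpha^{2}+2\alpha^{2}=O(\alpha^{2})$. This part is pure bookkeeping with the stated parameters.

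For the cost I would partition the SFO consumption of Algorithm~\ref{alg:hypergradient_oracle} into three pieces and show each is $\tilde{O}(\alpha^{-2})$: (i) the inner SGD of Step~3 producing $(\tilde{y}^{*}(x),\tilde{\lambda}(x))$ to $O(\delta)$ accuracy; (ii) the inner SGD of Step~5 producing $\tilde{y}(x)$ with $\|\tilde{y}(x)-y^{*}_{\tilde{\lambda},\alpha}(x)\|\le\delta$; and (iii) the $N_g$ i.i.d.\ gradient samples averaged in Step~6. Piece (iii) is exactly $N_g=\sigma^{2}/\alpha^{2}=\tilde{O}(\alpha^{-2})$. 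For (i), since $g(x,\cdot)$ is $\mu_g$-strongly convex and $C_g$-smooth with a condition number $\kappa_g$ that does not depend on $\alpha$, a mini-batch SGD with geometrically decreasing step size (equivalently, a stagewise scheme with growing batches) reaches iterate accuracy $\delta=\Theta(\alpha^{3})$ in $\tilde{O}(\kappa_g)$ outer stages, and because $-\mathbf{B}$ has full row rank on the active set (Assumption~\ref{assumption:smoothness}(iii)) the dual estimate $\tilde{\lambda}(x)$ recovered from the perturbed KKT stationarity inherits the same $O(\delta)$ rate. For (ii), the relevant object $L_{\tilde{\lambda},\alpha}(x,\cdot)$ is, by the choice of penalty weights, $\Theta(\alpha^{-2})$-strongly convex but only $\Theta(\alpha^{-4})$-smooth, hence has effective condition number $\Theta(\alpha^{-2})$; warm-started at $\tilde{y}^{*}(x)$ so the initial distance is $O(1)$, SGD reaches accuracy $\delta$ in $\tilde{O}(\alpha^{-2})$ steps, which is the binding term. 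Summing (i)--(iii) gives $\tilde{O}(\alpha^{-2})$ SFO calls, with $\tilde{O}$ absorbing $\log(1/\alpha)$ factors and the constants $\kappa_g,\mu_g,\sigma^{2}$.

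The routine parts are the parameter substitutions of the first paragraph and piece (iii); the step I expect to be the genuine obstacle is reconciling pieces (i)--(ii) with the target rate. The tension is that the penalty amplification in Lemmas~\ref{lemma:dual_extraction}--\ref{lemma:bias_bound} forces a tight inner accuracy $\delta=\Theta(\alpha^{3})$ precisely on objectives whose smoothness and gradient variance have themselves been inflated by $\alpha_1,\alpha_2$, so a naive SGD count threatens a larger power of $\alpha^{-1}$. Closing this gap requires carefully coupling the inner step-size and mini-batch schedules to $\delta$, $N_g$, and the effective condition numbers --- exploiting the large strong-convexity modulus $\Theta(\alpha^{-2})$ of $L_{\tilde{\lambda},\alpha}$ in Step~5, and arguing that the inner accuracy actually needed for an $O(\alpha)$ bias is no tighter than what the prescribed batch size already buys --- and I would devote most of the write-up to checking, stage by stage, that (number of iterations) $\times$ (per-step batch) stays $\tilde{O}(\alpha^{-2})$.
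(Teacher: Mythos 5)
Your decomposition is the same as the paper's: the accuracy half is exactly the paper's bookkeeping (bias $O(\alpha)$ from Lemma~\ref{lemma:bias_bound}, variance $\sigma^{2}/N_g=\alpha^{2}$ from Lemma~\ref{lemma:variance-bound} with $N_g=\sigma^{2}/\alpha^{2}$), and the cost half is split into the same three pieces the paper calls C1 (lower-level primal--dual pair to accuracy $\delta$), C2 (penalty minimization, with the same $\mu_{\mathrm{pen}}=\Theta(\alpha^{-2})$, $L_{\mathrm{pen}}=\Theta(\alpha^{-4})$, $\kappa_{\mathrm{pen}}=\Theta(\alpha^{-2})$ accounting), and C3 (the $N_g$ mini-batch samples).

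However, the step you yourself flag as the genuine obstacle --- showing that (iterations)$\times$(batch) stays $\tilde O(\alpha^{-2})$ when the target accuracy is $\delta=\Theta(\alpha^{3})$ --- is left unresolved in your write-up, and the solvers you propose would not resolve it. Under the bounded-variance oracle of Assumption~\ref{assumption:stochastic_oracle}, plain SGD (or restarted/stagewise SGD with growing batches) on a $\mu$-strongly convex objective needs on the order of $\sigma_{\mathrm{eff}}^{2}/(\mu^{2}\delta^{2})$ samples to reach iterate accuracy $\delta$: for piece (i) that is already $\Theta(\sigma^{2}\alpha^{-6})$, and for piece (ii) it is worse still because the gradient noise of $L_{\tilde\lambda,\alpha}$ is inflated by $\alpha_{1},\alpha_{2}$; the number of \emph{stages} being $\tilde O(\kappa)$ does not make the \emph{sample count} logarithmic in $1/\delta$. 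The paper closes exactly this gap by invoking linear-rate solvers rather than SGD: a stochastic primal--dual method asserted to converge linearly in expectation for C1, giving $t_{1}=O(\kappa_{g}\log(1/\delta))$ oracle calls, and a variance-reduced method (SVRG) for C2, giving $t_{2}=O(\kappa_{\mathrm{pen}}\log(1/\delta))=\tilde O(\alpha^{-2})$; only then does the total $t_{1}+t_{2}+N_{g}$ come out to $\tilde O(\alpha^{-2})$. So your proposal reproduces the paper's structure and constants but is missing the one ingredient (geometric-rate inner solvers, or an equivalent argument that $O(\delta)$ \emph{expected} accuracy suffices and can be bought with $\tilde O(\alpha^{-2})$ samples) that the lemma's claimed cost actually rests on.
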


\section{Stochastic Bilevel Algorithm and Convergence Analysis}

\label{sec:algorithm_convergence}
We now introduce the principal algorithm, F2CSA (Algorithm \ref{alg:convergence-inexact-grad}), which leverages the previously analyzed stochastic hypergradient oracle within a non-smooth, non-convex optimization framework.

We then present detailed convergence proofs that provide rigorous guarantees for identifying $(\delta, \epsilon)$-Goldstein stationary points.

\begin{algorithm}[ht]
\caption{Nonsmooth Nonconvex Algorithm with Inexact Stochastic Hypergradient Oracle}
\label{alg:convergence-inexact-grad}
\begin{algorithmic}[1]
\STATE \textbf{Input:} Initialization $x_0 \in \mathbb{R}^n$, clipping parameter $D > 0$, step size $\eta > 0$, Goldstein accuracy $\delta > 0$, iteration budget $T \in \mathbb{N}$, inexact stochastic gradient oracle $\nabla\tilde{F}: \mathbb{R}^n \to \mathbb{R}^n$
\STATE \textbf{Initialize:} $\Delta_1 = 0$
\FOR{$t = 1, \ldots, T$}
\STATE Sample $s_t \sim \text{Unif}[0, 1]$
\STATE $x_t = x_{t-1} + \Delta_t$, $z_t = x_{t-1} + s_t\Delta_t$
\STATE Compute $g_t = \nabla\tilde{F}(z_t)$ by running Algorithm~\ref{alg:hypergradient_oracle} with $N_g = \Theta(\sigma^2/\alpha^2)$ samples, so the inexact gradient has bias $O(\alpha)$ and variance $O(\alpha^2)$.
\STATE $\Delta_{t+1} = \text{clip}_D(\Delta_t - \eta g_t)$ \COMMENT{$\text{clip}_D(v) := \min\{1, \frac{D}{\|v\|}\} \cdot v$}
\ENDFOR
\STATE $M = \lfloor\frac{\delta}{D}\rfloor$, $K = \lfloor\frac{T}{M}\rfloor$ \COMMENT{Group iterations for Goldstein subdifferential}
\FOR{$k = 1, \ldots, K$}
\STATE $x_k = \frac{1}{M}\sum_{m=1}^{M} z_{(k-1)M+m}$
\ENDFOR
\STATE \textbf{Output:} $x_{\text{out}} \sim \text{Uniform}\{x_1, \ldots, x_K\}$
\end{algorithmic}
\end{algorithm}

Algorithm~\ref{alg:convergence-inexact-grad} provides an iterative framework leveraging our inexact stochastic hypergradient oracle. The method maintains a direction term $\Delta_t$, updated using a momentum-like step involving the oracle's output $g_t = \nabla\tilde{F}(z_t)$ and subsequently clipped to ensure $\|\Delta_t\| \le D$. The output iterates $x_k$ are constructed by averaging sample points $z_t$ to approximate the Goldstein subdifferential \cite{goldstein1977, zhang2020complexity, davis2019stochastic}.

\begin{remark}[Integration with Stochastic Hypergradient Oracle]\label{rem:algorithm_relationship_nsn}
Algorithm~\ref{alg:convergence-inexact-grad} utilizes Algorithm~\ref{alg:hypergradient_oracle} as its gradient estimation subroutine. This integration needs careful parameter adjustments for accurate oracle estimates and manageable computation.
\end{remark}
\subsection{Convergence to Goldstein Stationarity}

The following theorem establishes that with an inexact gradient oracle having bounded error, Algorithm \ref{alg:convergence-inexact-grad} achieves Goldstein stationarity with a specified number of iterations.

\begin{restatable}[Convergence with Stochastic Hypergradient Oracle]{theorem}{convergence}\label{thm:convergence-stochastic-grad}
Suppose $F: \mathbb{R}^n \to \mathbb{R}$ is $L_F$-Lipschitz. Let $\nabla\tilde{F}(\cdot)$ be a stochastic hypergradient oracle satisfying:
\begin{enumerate}
    \item Bias bound: $\|\mathbb{E}[\nabla\tilde{F}(x)] - \nabla F(x)\| \leq C_{\text{bias}}\alpha$
    \item Variance bound: $\mathbb{E}[\|\nabla\tilde{F}(x) - \mathbb{E}[\nabla\tilde{F}(x)]\|^2] \leq \frac{\sigma^2}{N_g}$
\end{enumerate}
Then running Algorithm \ref{alg:convergence-inexact-grad} with parameters $D = \Theta(\frac{\delta\epsilon^2}{L_F^2})$, $\eta = \Theta(\frac{\delta\epsilon^3}{L_F^4})$, and $N_g = \Theta(\frac{\sigma^2}{\alpha^2})$ outputs a point $x_{\text{out}}$ such that $\mathbb{E}[\text{dist}(\mathbf{0}, \partial_\delta F(x_{\text{out}}))] \leq \epsilon + O(\alpha)$, using $T = O(\frac{(F(x_0)-\inf F)L_F^2}{\delta\epsilon^3})$ calls to $\nabla\tilde{F}(\cdot)$.
\end{restatable}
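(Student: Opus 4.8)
The plan is to treat Algorithm~\ref{alg:convergence-inexact-grad} as an instance of the perturbed-clipped-descent scheme of \cite{zhang2020complexity} for nonsmooth nonconvex optimization, with the "true" gradient oracle replaced by the biased stochastic oracle $\nabla\tilde F$. First I would set up the standard potential-based argument: define $\Phi_t := F(x_t) - \inf F$ and show, via the $L_F$-Lipschitz (hence, after the usual randomized-smoothing argument, "$(\delta,\cdot)$-descent") property of $F$, a one-step inequality of the form $\mathbb{E}[F(x_{t+1})] \le \mathbb{E}[F(x_t)] - \eta\,\mathbb{E}[\langle \nabla F(z_t), g_t\rangle] + (\text{error terms})$, where the error terms collect (i) the clipping bias, controlled by $D$; (ii) the oracle bias, contributing at most $C_{\text{bias}}\alpha$ per unit step by the bias hypothesis and Cauchy--Schwarz; and (iii) the oracle variance, contributing at most $\eta^2 \sigma^2/N_g$ per step by the variance hypothesis. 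Summing over the $M$ iterations inside one block $k$ and using $x_k = \tfrac1M\sum_m z_{(k-1)M+m}$ together with $\|\Delta_t\|\le D$, $MD \le \delta$, I would relate the averaged displacement to a vector lying in $\partial_\delta F(x_k)$ — this is the point where the Goldstein subdifferential enters: the average of gradients at the $z_t$'s, all within distance $\delta$ of $x_k$, is (up to the accumulated errors) an element of $\partial_\delta F(x_k)$.

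Next I would telescope across the $K$ blocks. The key inequality becomes, schematically,
\[
\frac{1}{K}\sum_{k=1}^K \mathbb{E}\!\left[\operatorname{dist}(\mathbf{0},\partial_\delta F(x_k))\right] \;\lesssim\; \frac{F(x_0)-\inf F}{\eta T} + \frac{\eta L_F^4}{D^2}\cdot(\cdots) + C_{\text{bias}}\alpha + \sqrt{\frac{\sigma^2}{N_g}},
\]
after dividing through by the appropriate power of $D$ that converts the summed inner products into a distance bound (this is the mechanism by which the step size and clipping radius must be coupled). Plugging in $D = \Theta(\delta\epsilon^2/L_F^2)$ and $\eta = \Theta(\delta\epsilon^3/L_F^4)$ makes the first two (optimization) terms each $O(\epsilon)$ once $T = O((F(x_0)-\inf F)L_F^2/(\delta\epsilon^3))$; the variance term is $O(\epsilon)$ once $N_g = \Theta(\sigma^2/\alpha^2)$ and $\alpha = \Theta(\epsilon)$ (or, kept symbolic, it is $O(\alpha)$); and the bias term is the advertised $O(\alpha)$. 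Since $x_{\text{out}}$ is drawn uniformly from $\{x_1,\dots,x_K\}$, the left-hand side is exactly $\mathbb{E}[\operatorname{dist}(\mathbf{0},\partial_\delta F(x_{\text{out}}))]$, giving the claim $\le \epsilon + O(\alpha)$.

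The main obstacle, and the step deserving the most care, is the block-averaging argument that certifies membership in $\partial_\delta F(x_k)$ in the presence of both bias and variance. In the exact-oracle analysis of \cite{zhang2020complexity} one uses that $g_t = \nabla F(z_t)$ exactly, so $\tfrac1M\sum_m g_{(k-1)M+m}$ is literally a convex combination of Clarke subgradients at points in $B_\delta(x_k)$. Here $g_t = \nabla\tilde F(z_t)$ has a bias $C_{\text{bias}}\alpha$ and a fluctuation of size $\sigma/\sqrt{N_g}$, so one must write $g_t = \nabla F(z_t) + b_t + \nu_t$, carry the bias term $b_t$ through as an additive $O(\alpha)$ perturbation of the certified subgradient, and control the martingale-difference sum $\tfrac1M\sum_m \nu_{(k-1)M+m}$ — whose conditional second moment is $O(\sigma^2/(M N_g))$ — in expectation. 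Ensuring that the clipping does not interact badly with $\nu_t$ (i.e., that $\|\Delta_t - \eta g_t\|$ stays comparable to $D$ so the clip is essentially inactive, or that clipping only helps) is the delicate technical point; this is handled by choosing $D$ and $\eta$ so that $\eta\|g_t\| \le \eta(L_F + C_{\text{bias}}\alpha + \|\nu_t\|)$ is, in expectation, dominated by $D$, which is exactly why the stated scalings of $D$ and $\eta$ in terms of $\delta,\epsilon,L_F$ are forced.
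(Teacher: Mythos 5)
Your proposal lands in the right family --- block averaging, membership of an averaged gradient in $\partial_\delta F(x_k)$ via $MD\le\delta$, and the final parameter scalings all match the paper --- but the two steps that actually carry the paper's proof are missing, and the one concrete mechanism you do propose is shaky. Your one-step bound $\mathbb{E}[F(x_{t+1})]\le\mathbb{E}[F(x_t)]-\eta\,\mathbb{E}[\langle\nabla F(z_t),g_t\rangle]+(\text{errors})$ is a descent-lemma-type inequality, which is not available here: $F$ is only $L_F$-Lipschitz and nonsmooth, the appeal to a ``randomized-smoothing argument'' is not developed, and the inequality does not even match the algorithm's update ($x_t=x_{t-1}+\Delta_t$ with $\Delta_{t+1}=\mathrm{clip}_D(\Delta_t-\eta g_t)$ is a momentum-like online-learning update, not $x_{t+1}=x_t-\eta g_t$). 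The paper needs no descent lemma and no ``clipping bias'' error term: since $z_t=x_{t-1}+s_t\Delta_t$ with $s_t\sim\mathrm{Unif}[0,1]$, the fundamental theorem of calculus gives the exact identity $F(x_t)-F(x_{t-1})=\mathbb{E}_{s_t}[\langle\nabla F(z_t),\Delta_t\rangle]$, which is telescoped and split, for arbitrary comparators $u_k$ with $\|u_k\|\le D$, into an online-gradient-descent regret term plus the term $\sum_m\langle\nabla F(z_{(k-1)M+m}),u_k\rangle$.

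You also place the bias and variance in the wrong part of the argument, and consequently fight a battle the paper never has to fight. The certified element of $\partial_\delta F(x_k)$ is the average of \emph{true} gradients $\frac1M\sum_m\nabla F(z_{(k-1)M+m})$, which lies in the Goldstein subdifferential exactly (by convexity), with no bias or martingale terms to carry; its norm is extracted by the comparator choice $u_k=-D\,\sum_m\nabla F(z_{(k-1)M+m})/\|\sum_m\nabla F(z_{(k-1)M+m})\|$. The oracle error enters only once, in the regret part, through $\mathbb{E}[\langle\nabla F(z_t)-\tilde g_t,\Delta_t-u_k\rangle]\le 2D\,\mathbb{E}\|\nabla F(z_t)-\tilde g_t\|=O(D\alpha)$ by Cauchy--Schwarz, using bias $C_{\text{bias}}\alpha$ plus $\sigma/\sqrt{N_g}=O(\alpha)$ under $N_g=\Theta(\sigma^2/\alpha^2)$. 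Hence your ``main obstacle'' --- carrying $b_t+\nu_t$ through the membership certificate and ensuring the clip is ``essentially inactive'' --- does not arise: clipping is projection onto a convex ball and only its non-expansiveness is used in the regret recursion. Without the FTC identity and the comparator choice, your sketch asserts the key inequality (``after dividing through by the appropriate power of $D$'') rather than deriving it, so as written the proposal has a genuine gap even though its high-level architecture and parameter choices are the right ones.
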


\begin{proof}[Proof Sketch]
We first take $N_g=\Theta(\sigma^2/\alpha^2)$ to get bias $O(\alpha)$ and variance $O(\alpha^2)$.\par
Clip online gradient descent telescopes; error terms are $O(\eta)$ (stability) and $O(D\alpha)$ (stochastic).\par
Choose $M=\Theta(\epsilon^{-2})$ and $D=\Theta(\delta\epsilon^2)$ so $\|z_t-x_k\|\le MD\le\delta$ and the block average lies in $\partial_\delta F(x_k)$.\par
Set $\eta=\Theta(\delta\epsilon^3)$ and run $T=O(((F(x_0)-\inf F)L_F^2)/(\delta\epsilon^3))$ to obtain $\mathbb{E}[\operatorname{dist}(\mathbf{0},\partial_\delta F(x_{\text{out}}))]\le\epsilon+O(\alpha)$.\par
Finally, set $\alpha=\Theta(\epsilon)$ to conclude $\Theta(\epsilon)$ stationarity.
\end{proof}

Using Theorem~\ref{thm:convergence-stochastic-grad}, we can finally show the overall complexity of stochastic constrained bilevel optimization.
\begin{theorem}[Complexity of solving stochastic constrained bilevel optimization]
The total stochastic first-order oracle (SFO) complexity is
\begin{restatable}{equation}{sfocomplexity}
\begin{aligned}
T \cdot N_g &= \Theta\left(\frac{F(x_0) - \inf F}{\delta\epsilon^3}\right) \cdot \Theta\left(\frac{\sigma^2}{\epsilon^2}\right) = \Theta\left(\frac{(F(x_0) - \inf F) \sigma^2}{\delta\epsilon^5}\right)
\end{aligned}
\end{restatable}
Including logarithmic factors from the inner loops, this becomes:
\begin{restatable}{equation}{sfocomplexitylog}
\begin{aligned}
\text{SFO complexity} &= \tilde{O}\left(\frac{(F(x_0) - \inf F) \sigma^2}{\delta\epsilon^5}\right) = \tilde{O}(\delta^{-1}\epsilon^{-5})
\end{aligned}
\end{restatable}
\end{theorem}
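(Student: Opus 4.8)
The plan is to obtain the bound by composing the two cost estimates already established: the outer iteration count of Theorem~\ref{thm:convergence-stochastic-grad} and the per-oracle-call cost of Lemma~\ref{lem:inner-cost}. First I would pin down the oracle accuracy at $\alpha=\Theta(\epsilon)$. This is dictated by the convergence guarantee: Theorem~\ref{thm:convergence-stochastic-grad} returns $x_{\mathrm{out}}$ with $\mathbb{E}[\mathrm{dist}(\mathbf{0},\partial_\delta F(x_{\mathrm{out}}))]\le \epsilon + O(\alpha)$, so taking $\alpha=\Theta(\epsilon)$ collapses the right-hand side to $O(\epsilon)$, i.e.\ genuine $(\delta,\epsilon)$-Goldstein stationarity up to constants. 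With this choice, Theorem~\ref{thm:hypergradient-accuracy} and Lemma~\ref{lem:inner-cost} yield an oracle with bias $O(\alpha)=O(\epsilon)$ and variance $O(\alpha^2)=O(\epsilon^2)$ when $N_g=\Theta(\sigma^2/\alpha^2)=\Theta(\sigma^2/\epsilon^2)$, matching the hypotheses of Theorem~\ref{thm:convergence-stochastic-grad}; note that the inner-accuracy parameter $\Theta(\alpha^3)=\Theta(\epsilon^3)$ used inside Algorithm~\ref{alg:hypergradient_oracle} is a separate quantity from the fixed Goldstein radius $\delta$.

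Next I would read off the two factors being multiplied. By Theorem~\ref{thm:convergence-stochastic-grad}, with $D=\Theta(\delta\epsilon^2/L_F^2)$ and $\eta=\Theta(\delta\epsilon^3/L_F^4)$, the outer loop of Algorithm~\ref{alg:convergence-inexact-grad} issues $T=O\!\big((F(x_0)-\inf F)L_F^2/(\delta\epsilon^3)\big)$ calls to $\nabla\tilde F(\cdot)$. By Lemma~\ref{lem:inner-cost}, each such call costs $\tilde O(\alpha^{-2})=\tilde O(\epsilon^{-2})$ stochastic first-order evaluations in its inner SGD routines (the approximate primal--dual LL solve and the penalized-Lagrangian minimization) together with $N_g=\Theta(\sigma^2/\epsilon^2)$ evaluations in the terminal mini-batch average; both contributions are $\tilde O(\sigma^2\epsilon^{-2})$, so the per-call cost is $\tilde O(\sigma^2\epsilon^{-2})$.

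Multiplying the two counts yields
\[
T\cdot \tilde O\!\Big(\frac{\sigma^2}{\epsilon^2}\Big)
= \Theta\!\Big(\frac{F(x_0)-\inf F}{\delta\epsilon^3}\Big)\cdot\Theta\!\Big(\frac{\sigma^2}{\epsilon^2}\Big)
= \Theta\!\Big(\frac{(F(x_0)-\inf F)\,\sigma^2}{\delta\epsilon^5}\Big),
\]
which is the first displayed identity; absorbing the $\kappa_g\log(1/\epsilon)$ inner-iteration factors (from the Inner Loop Complexity remark) and the constants $L_F$, $\sigma^2$, $F(x_0)-\inf F$ into $\tilde O(\cdot)$ gives $\tilde O(\delta^{-1}\epsilon^{-5})$, the second displayed identity.

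I expect the only real work to be a bookkeeping obstacle rather than a deep one: verifying that the parameter settings demanded by the three ingredients are mutually consistent under the substitution $\alpha=\Theta(\epsilon)$ — the penalty weights $\alpha_1=\alpha^{-2}$, $\alpha_2=\alpha^{-4}$, the inner accuracy $\Theta(\alpha^3)$, and the strong-convexity margin $\mu=\Theta(\alpha^{-2})$ required by Lemmas~\ref{lemma:solution_approx} and~\ref{lemma:bias_bound}, against the $D,\eta,M,K$ schedule of Theorem~\ref{thm:convergence-stochastic-grad} written in terms of $\delta,\epsilon,L_F$, against the batch size $N_g=\Theta(\sigma^2/\epsilon^2)$. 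All of these hold simultaneously once $\epsilon$ is small enough, so the stated complexity is exactly the product of the outer iteration count and the per-call SFO cost.
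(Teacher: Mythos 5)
Your proposal is correct and follows essentially the same route as the paper: fix $\alpha=\Theta(\epsilon)$, take $T=O\bigl((F(x_0)-\inf F)L_F^2/(\delta\epsilon^3)\bigr)$ from Theorem~\ref{thm:convergence-stochastic-grad}, take the per-call cost $\tilde O(\alpha^{-2})+N_g=\tilde O(\sigma^2\epsilon^{-2})$ from Lemma~\ref{lem:inner-cost}, and multiply, absorbing constants and the logarithmic inner-loop factors into $\tilde O(\cdot)$. Your explicit remark distinguishing the inner tolerance $\Theta(\alpha^3)$ from the Goldstein radius $\delta$ is a welcome clarification of a notational overload in the paper, but it does not change the argument.
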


\section{Experiments}

To validate our theoretical analysis and assess the practical performance of the proposed F2CSA algorithm, we conduct experiments on synthetic bilevel optimization problems. We compare our method against SSIGD~\cite{khanduri2023linearly} and DSBLO~\cite{khanduri2024doubly}, both Hessian-based approaches by Khanduri et al. SSIGD uses an implicit gradient approach while DSBLO employs a doubly stochastic bilevel method. These comparisons highlight the computational advantages of our first-order approach over methods requiring second-order information.

\subsection{Problem Setup}

We evaluate our approach on toy bilevel problems with box constraints:
\begin{align}
\min\nolimits_{x \in \mathbb{R}^d} & f(x,y^*(x)) \coloneqq \tfrac{1}{2} x^\top Q_u x + c_u^\top x + \tfrac{1}{2} y^\top P y + x^\top P y \label{eq:f_exp}\\
\text{s.t. } & y^*(x) \in \argmin\nolimits_{y \in [-\mathbf{1}, \mathbf{1}]} g(x,y) \coloneqq \tfrac{1}{2} y^\top Q_l y + c_l^\top y + x^\top y \label{eq:g_exp}
\end{align}

Parameters $Q_u, Q_l, P, c_u, c_l$ are sampled from zero-mean Gaussians. Stochasticity is introduced by adding Gaussian noise $\mathcal{N}(0,\sigma^2)$ to the quadratic terms during gradient evaluations with noise standard deviation $\sigma = 0.01$.
All algorithms use identical problem instances, initial points, random seeds, and the same lower-level solver to ensure fair evaluation. Step sizes are calibrated to be comparable across methods: SSIGD employs diminishing step sizes with $\beta = 10^{-4}$, DSBLO uses adaptive step size selection, and F2CSA utilizes fixed step size $\eta = 10^{-5}$, reflecting their different algorithmic structures. 
\subsection{Results and Analysis}
\begin{figure}[htb!]
    \centering
    \begin{minipage}{0.49\textwidth}
        \centering
        \includegraphics[width=\linewidth]{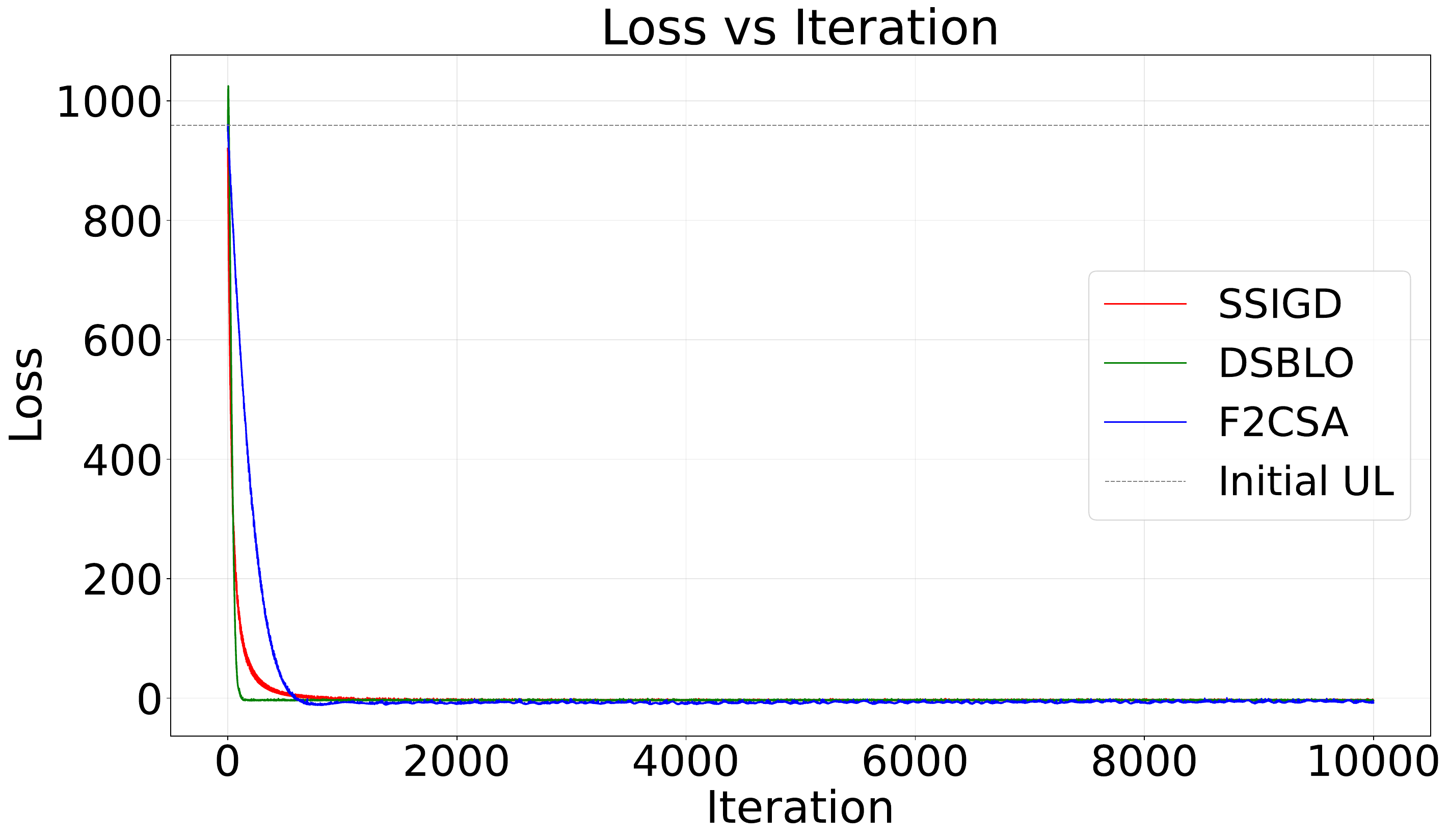}
        \caption{Loss convergence trajectories for F2CSA, SSIGD, and DSBLO in dimension 50.}
        \label{fig:convergence}
    \end{minipage}%
    \hfill
    \begin{minipage}{0.5\textwidth}
        \centering
        \includegraphics[width=\linewidth]{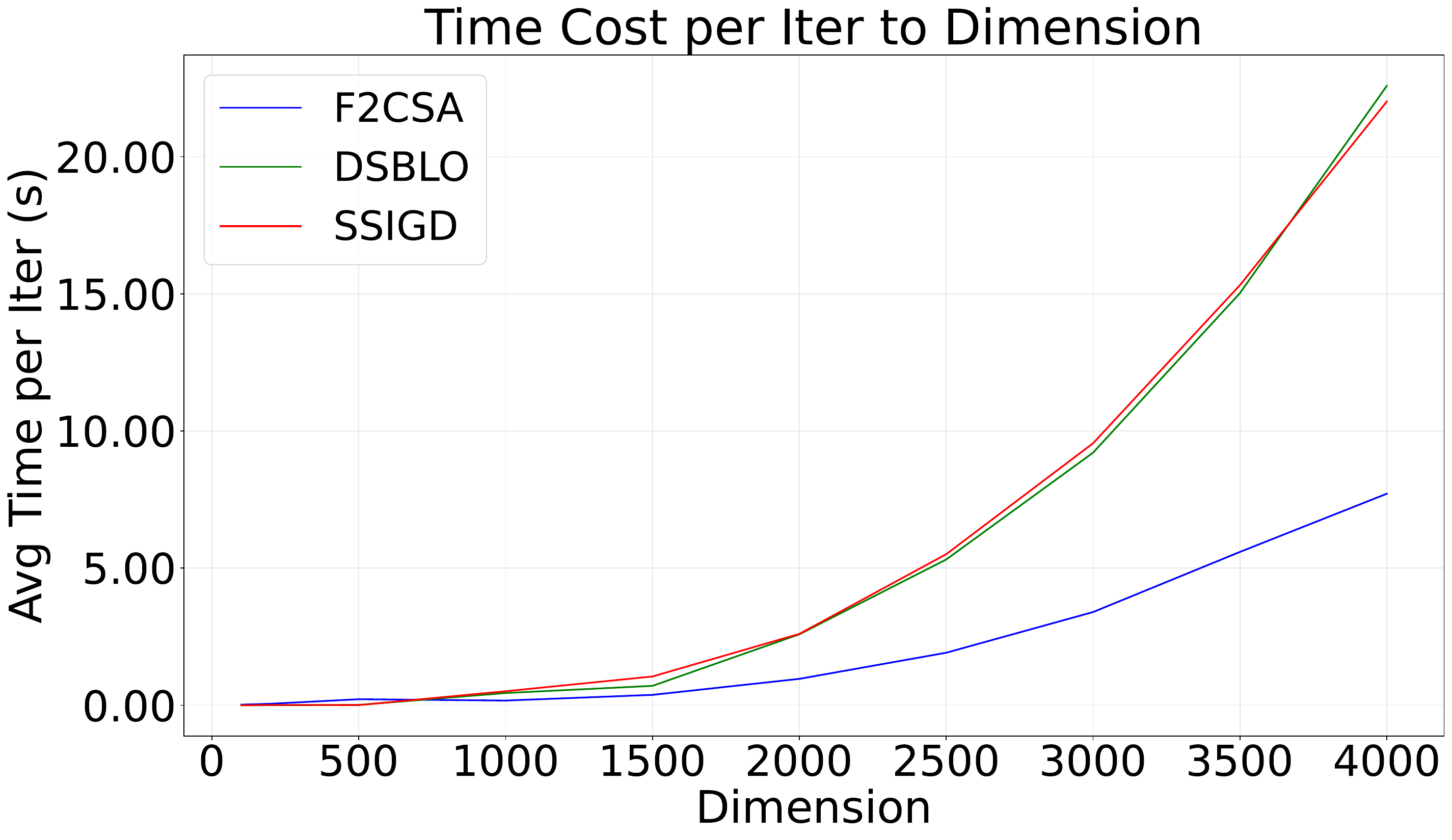}
        \caption{Computational cost scaling with problem dimension.}
        \label{fig:scalability}
    \end{minipage}
\end{figure}

\subsubsection{Convergence Performance}

Figure~\ref{fig:convergence} shows convergence trajectories on a 50-dimensional problem. All three methods converge to similar final loss values, with F2CSA maintaining stable convergence. The comparable performance of F2CSA to Hessian-based methods is consistent with our theoretical analysis in Lemma~\ref{lemma:bias_bound}, which bounds the oracle error at $O(\alpha)$ bias and $O(1/N_g)$ variance.

\subsubsection{Computational Scalability}

Figure~\ref{fig:scalability} shows computational cost scaling with problem dimension from $d = 100$ to $d = 4000$. The plot reveals a crossover point around $d = 1000$: for $d < 1000$, Hessian-based methods (DSBLO and SSIGD) are faster, while for $d > 1000$, F2CSA becomes increasingly advantageous. At $d = 4000$, F2CSA requires 7.7 seconds compared to 22.6 seconds for DSBLO and 22.0 seconds for SSIGD, representing approximately 3$\times$ speedup. The plot shows F2CSA maintains near-linear growth, while Hessian-based methods exhibit super-linear growth as dimension increases.

\subsubsection{Key Insights}

The experimental results demonstrate that F2CSA achieves comparable convergence performance to Hessian-based methods while providing superior computational efficiency in high dimensions. The crossover around $d = 1000$ and the 3$\times$ speedup at $d = 4000$ validate the theoretical advantage of our first-order approach, which avoids quadratic-scaling Hessian computations. This makes F2CSA well-suited for high-dimensional applications where computational efficiency is critical.

\section{Conclusion and Future Work}

We introduced a fully first-order framework for linearly constrained stochastic bilevel optimization and established the first finite-time guarantee to $(\delta,\epsilon)$-Goldstein stationarity using a smoothed penalty-based hypergradient oracle. Section~\ref{sec:error_analysis} quantified the oracle's error via an $O(\alpha)$ bias and $O(1/N_g)$ variance, which, together with the inner-loop cost in Lemma~\ref{lem:inner-cost}, yielded the calibrated choice $N_g=\Theta(\sigma^2/\alpha^2)$ and inner tolerance $\delta=\Theta(\alpha^3)$. Section~\ref{sec:algorithm_convergence} integrated this oracle into a clipped nonsmooth algorithm attaining $\mathbb{E}[\operatorname{dist}(0,\partial_\delta F(x_{\text{out}}))] \le \epsilon+O(\alpha)$ in $T=O(((F(x_0)-\inf F)L_F^2)/(\delta\epsilon^3))$ iterations; setting $\alpha=\Theta(\epsilon)$ implies the overall SFO complexity $\tilde O(\delta^{-1}\epsilon^{-5})$. Experiments corroborated the theory: F2CSA achieves comparable convergence performance to Hessian-based methods while scaling favorably in high dimensions, outperforming baselines in wall-clock time at large $d$ without sacrificing solution quality.

Two limitations are noteworthy. First, the rate is one factor of $\epsilon$ from the best-known stochastic nonsmooth complexity, suggesting headroom for variance reduction or momentum to close this gap. Second, our analysis hinges on LICQ, strong convexity of the lower level, and linear constraints; relaxing any of these raises technical challenges in bias control and stability of the penalized subproblem.

Promising directions include: (i) tighter oracle design with variance-reduced estimators or momentum (e.g., SPIDER-style or clipping-plus-momentum) to approach the optimal $\tilde O(\delta^{-1}\epsilon^{-4})$ dependence; (ii) structure-aware penalties that remain stable under weaker qualifications or partial degeneracy; (iii) specialized treatments of one-sided stochasticity (UL-only or LL-only), where improved batching can sharpen constants and exponents; and (iv) extending the smoothing-penalty machinery to certain nonlinear constraints while preserving first-order implementability and finite-time control. These steps would broaden practicality in meta-learning, RL, and large-scale ERM scenarios where bilevel structure and constraints are intrinsic.
\bibliographystyle{plainnat}
\bibliography{references}

\newpage
\appendix
\onecolumn
\section{Appendix}

\dualextraction*
\begin{proof}
Define penalty Lagrangian:
\begin{align}
L_{\lambda,\alpha}(x,y) = f(x, y) + \alpha_1(g(x, y) + \lambda^T h(x, y) - g(x, \tilde{y}^*(x))) \\
&\quad + \frac{\alpha_2}{2} \sum_{i=1}^{p} \rho_i(x) h_i(x, y)^2 \label{eq:lag_def}
\end{align}

with activation $\rho_i(x) = \sigma_h(h_i(x, \tilde{y}^*(x))) \cdot \sigma_\lambda(\lambda_i(x))$ and dual error $\Delta \lambda = \lambda^*(x) - \tilde{\lambda}(x)$.

The gradient difference decomposes as:
\begin{align}
\nabla L_{\lambda^*, \alpha} - \nabla L_{\tilde{\lambda}} = \underbrace{\alpha_1 (\nabla h)^T \Delta \lambda}_{\text{Linear Term}} + \underbrace{\nabla \Delta_Q}_{\text{Quadratic Term}} \label{eq:grad_decomp}
\end{align}

where $\Delta_Q = \frac{\alpha_2}{2}\sum_{i=1}^{p}\Delta\rho_i(x) h_i(x,y)^2$ with $\Delta\rho_i(x) = \rho_i^*(x) - \tilde{\rho}_i(x)$.

Linear Term in \eqref{eq:grad_decomp}:

From $h(x,y) = \mathbf{A}x - \mathbf{B}y - \mathbf{b}$, we have:
\begin{align}
\|\nabla h\| \leq \|\mathbf{A}\| + \|\mathbf{B}\| \leq 2M_{\nabla h} \label{eq:grad_h_bound}
\end{align}

Using \eqref{eq:grad_h_bound} and $\|\Delta \lambda\| \leq C_\lambda \delta$:
\begin{align}
\|\alpha_1 (\nabla h)^T \Delta \lambda\| \leq \alpha_1 \cdot 2M_{\nabla h} \cdot C_\lambda \delta = O(\alpha_1 \delta) \label{eq:linear_bound}
\end{align}

Quadratic Term in \eqref{eq:grad_decomp}:

The quadratic gradient expands to:
\begin{align}
\nabla\Delta_Q = \underbrace{\alpha_2\sum_{i=1}^{p}\Delta\rho_i h_i \nabla h_i}_{\text{$(T_1)$}} + \underbrace{\frac{\alpha_2}{2}\sum_{i=1}^{p}h_i^2 \nabla\Delta\rho_i}_{\text{T2}} \label{eq:quad_decomp}
\end{align}

(T1): $\Delta\rho_i \neq 0$ only for near-active constraints where $|h_i(x,\tilde{y}^*(x))| \leq \tau\delta = O(\delta)$.

For constraint values:
\begin{align}
h_i(x,y) - h_i(x,\tilde{y}^*(x)) = \mathbf{B}_i(\tilde{y}^*(x) - y) \\
&\implies |h_i(x,y) - h_i(x,\tilde{y}^*(x))| \leq M_{\nabla h} \delta \label{eq:h_diff}
\end{align}

Using \eqref{eq:h_diff}: $|h_i(x,y)| \leq O(\delta) + M_{\nabla h}\delta = O(\delta)$ for near-active constraints.

With $|\Delta\rho_i| \leq 1$ and $\|\nabla h_i\| \leq 2M_{\nabla h}$:
\begin{align}
\|\Delta\rho_i h_i \nabla h_i\| \leq O(\delta) \implies \|\text{T1}\| \leq \alpha_2 p \cdot O(\delta) = O(\alpha_2 \delta) \label{eq:term1_bound}
\end{align}

$(T_2)$: $\|\nabla\Delta\rho_i\| = O(1/\delta)$ only when $|h_i(x,\tilde{y}^*(x))| = O(\delta)$.

In these regions: $|h_i(x,y)| = O(\delta)$ so:
\begin{align}
h_i^2 \cdot \|\nabla\Delta\rho_i\| = O(\delta)^2 \cdot O(1/\delta) = O(\delta) \label{eq:h_squared_grad}
\end{align}

Using \eqref{eq:h_squared_grad}:
\begin{align}
\|\text{T2}\| \leq \frac{\alpha_2}{2} p \cdot O(\delta) = O(\alpha_2 \delta) \label{eq:term2_bound}
\end{align}

From \eqref{eq:linear_bound}, \eqref{eq:term1_bound}, and \eqref{eq:term2_bound}:
\begin{align}
\|\nabla L_{\lambda^*, \alpha} - \nabla L_{\tilde{\lambda}}\| \leq O(\alpha_1 \delta) + O(\alpha_2 \delta) + O(\alpha_2 \delta) = O(\alpha_1 \delta + \alpha_2 \delta)
\end{align}
\end{proof}

\solutionapprox*
\begin{proof}
For brevity, let $y^{*}_{\lambda^*,\alpha}(x) = y^{*}_{\lambda^*}$ and $y^{*}_{\tilde{\lambda},\alpha}(x) = y^{*}_{\tilde{\lambda}}$.
From the definition of these minimizers, we have the first-order optimality conditions:
\begin{align}
\nabla_y L_{\lambda^*,\alpha}(x, y^{*}_{\lambda^*}) &= 0 \label{eq:opt_cond1} \\
\nabla_y L_{\tilde{\lambda},\alpha}(x, y^{*}_{\tilde{\lambda}}) &= 0 \label{eq:opt_cond2}
\end{align}
The lemma assumes that $L_{\lambda,\alpha}(x,y)$ is $\mu$-strongly convex in $y$ (for relevant $\lambda$, including $\lambda^*$). Thus, $L_{\lambda^*,\alpha}(x, \cdot)$ is $\mu$-strongly convex.
A standard property of a $\mu$-strongly convex function $\phi(y)$ with minimizer $y_1^*$ is that for any $y_2$:
$$ \mu \|y_1^* - y_2\|^2 \leq \langle \nabla_y \phi(y_1^*) - \nabla_y \phi(y_2), y_1^* - y_2 \rangle $$
Applying this with $\phi(y) = L_{\lambda^*,\alpha}(x,y)$, $y_1^* = y^{*}_{\lambda^*}$, and $y_2 = y^{*}_{\tilde{\lambda}}$:
$$ \mu \|y^{*}_{\lambda^*} - y^{*}_{\tilde{\lambda}}\|^2 \leq \langle \nabla_y L_{\lambda^*,\alpha}(x, y^{*}_{\lambda^*}) - \nabla_y L_{\lambda^*,\alpha}(x, y^{*}_{\tilde{\lambda}}), y^{*}_{\lambda^*} - y^{*}_{\tilde{\lambda}} \rangle $$
Using the optimality condition from Eq.~(\ref{eq:opt_cond1}), $\nabla_y L_{\lambda^*,\alpha}(x, y^{*}_{\lambda^*}) = 0$, this simplifies to:
$$ \mu \|y^{*}_{\lambda^*} - y^{*}_{\tilde{\lambda}}\|^2 \leq \langle - \nabla_y L_{\lambda^*,\alpha}(x, y^{*}_{\tilde{\lambda}}), y^{*}_{\lambda^*} - y^{*}_{\tilde{\lambda}} \rangle $$
Now, we add and subtract $\nabla_y L_{\tilde{\lambda},\alpha}(x, y^{*}_{\tilde{\lambda}})$ inside the inner product (and use $\nabla_y L_{\tilde{\lambda},\alpha}(x, y^{*}_{\tilde{\lambda}}) = 0$ from Eq.~(\ref{eq:opt_cond2})):
\begin{align*}
\mu \|y^{*}_{\lambda^*} - y^{*}_{\tilde{\lambda}}\|^2 &\leq \langle \nabla_y L_{\tilde{\lambda},\alpha}(x, y^{*}_{\tilde{\lambda}}) - \nabla_y L_{\lambda^*,\alpha}(x, y^{*}_{\tilde{\lambda}}), y^{*}_{\lambda^*} - y^{*}_{\tilde{\lambda}} \rangle \\
&\quad \textit{(since } \nabla_y L_{\tilde{\lambda},\alpha}(x, y^{*}_{\tilde{\lambda}}) = 0 \textit{)}
\end{align*}
Applying the Cauchy-Schwarz inequality:
$$ \mu \|y^{*}_{\lambda^*} - y^{*}_{\tilde{\lambda}}\|^2 \leq \|\nabla_y L_{\tilde{\lambda},\alpha}(x, y^{*}_{\tilde{\lambda}}) - \nabla_y L_{\lambda^*,\alpha}(x, y^{*}_{\tilde{\lambda}})\| \cdot \|y^{*}_{\lambda^*} - y^{*}_{\tilde{\lambda}}\| $$
If $y^{*}_{\lambda^*} \neq y^{*}_{\tilde{\lambda}}$, we can divide by $\|y^{*}_{\lambda^*} - y^{*}_{\tilde{\lambda}}\|$:
$$ \mu \|y^{*}_{\lambda^*} - y^{*}_{\tilde{\lambda}}\| \leq \|\nabla_y L_{\lambda^*,\alpha}(x, y^{*}_{\tilde{\lambda}}) - \nabla_y L_{\tilde{\lambda},\alpha}(x, y^{*}_{\tilde{\lambda}})\| $$

Lemma~\ref{lemma:dual_extraction} states that for any fixed $(x,y)$, $\|\nabla L_{\lambda^*, \alpha}(x, y) - \nabla L_{\tilde{\lambda},\alpha}(x,y)\| \leq O(\alpha_1 \delta + \alpha_2 \delta)$. This implies there exists a constant, which we identify with $C_{\text{sol}}$ from the lemma statement (where $C_{\text{sol}}$ depends on $C_\lambda$ and $M_{\nabla h}$), such that:
$$ \|\nabla L_{\lambda^*, \alpha}(x, y^{*}_{\tilde{\lambda}}) - \nabla L_{\tilde{\lambda},\alpha}(x, y^{*}_{\tilde{\lambda}})\| \leq C_{\text{sol}}(\alpha_1 + \alpha_2)\delta $$
Substituting this into the inequality above:
$$ \mu \|y^{*}_{\lambda^*} - y^{*}_{\tilde{\lambda}}\| \leq C_{\text{sol}}(\alpha_1 + \alpha_2)\delta $$
Dividing by $\mu$ (which is positive as $\mu = \Omega(\alpha\mu_g)$ and $\mu_g > 0, \alpha >0$) yields the result:
$$ \|y^{*}_{\lambda^*,\alpha}(x) - y^{*}_{\tilde{\lambda},\alpha}(x)\| \leq \frac{C_{\text{sol}}}{\mu}(\alpha_1 + \alpha_2)\delta $$
If $y^{*}_{\lambda^*} = y^{*}_{\tilde{\lambda}}$, the inequality holds trivially. This completes the proof.
\end{proof}

\hypergradapprox*
\begin{proof}
The quantity to bound is the bias $\| \mathbb{E}[\nabla\tilde{F}(x)] - \nabla F(x) \| = \| \nabla_x L_{\tilde{\lambda},\alpha}(x, \tilde{y}(x)) - \nabla F(x) \|$. We decompose this error into three parts using the triangle inequality:
\begin{align}
\| \nabla_x L_{\tilde{\lambda},\alpha}(x, \tilde{y}(x)) - \nabla F(x) \| 
&\leq \underbrace{\|\nabla_x L_{\tilde{\lambda},\alpha}(x, \tilde{y}(x)) - \nabla_x L_{\tilde{\lambda},\alpha}(x, y^*_{\tilde{\lambda},\alpha}(x))\|}_{T_1} \\
&\quad + \underbrace{\|\nabla_x L_{\tilde{\lambda},\alpha}(x, y^*_{\tilde{\lambda},\alpha}(x)) - \nabla_x L_{\lambda^*,\alpha}(x, y^*_{\lambda^*,\alpha}(x))\|}_{T_2} \\
&\quad + \underbrace{\|\nabla_x L_{\lambda^*,\alpha}(x, y^*_{\lambda^*,\alpha}(x)) - \nabla F(x)\|}_{T_3}
\end{align}

$(T_1)$: This term bounds the error from the inexact minimization of $L_{\tilde{\lambda},\alpha}(x,\cdot)$. Using the $L_{H,y}$-Lipschitz continuity of $\nabla_x L_{\tilde{\lambda},\alpha}(x,y)$ with respect to $y$ (as assumed in the lemma statement) and the condition $\|\tilde{y}(x) - y^*_{\tilde{\lambda},\alpha}(x)\| \leq \delta$ (from the lemma statement, where $\delta = \Theta(\alpha^3)$):
\begin{align}
T_1 \leq L_{H,y} \|\tilde{y}(x) - y^*_{\tilde{\lambda},\alpha}(x)\| \leq L_{H,y} \delta = O(\delta).
\end{align}

$(T_2)$: This term bounds the error from using the approximate dual $\tilde{\lambda}(x)$ instead of the true dual $\lambda^*(x)$ in defining the penalty Lagrangian and its minimizer. Using the triangle inequality:
\begin{align}
T_2 &\leq \|\nabla_x L_{\tilde{\lambda},\alpha}(x, y^*_{\tilde{\lambda},\alpha}(x)) - \nabla_x L_{\tilde{\lambda},\alpha}(x, y^*_{\lambda^*,\alpha}(x))\| \nonumber \\
&\quad + \|\nabla_x L_{\tilde{\lambda},\alpha}(x, y^*_{\lambda^*,\alpha}(x)) - \nabla_x L_{\lambda^*,\alpha}(x, y^*_{\lambda^*,\alpha}(x))\|.
\end{align}
The first part of the sum is bounded by $L_{H,y} \|y^*_{\tilde{\lambda},\alpha}(x) - y^*_{\lambda^*,\alpha}(x)\|$ (using the assumed $L_{H,y}$-Lipschitz continuity of $\nabla_x L_{\tilde{\lambda},\alpha}(x,y)$ w.r.t $y$). The second part is bounded by $L_{H,\lambda} \|\tilde{\lambda}(x) - \lambda^*(x)\|$ (using the assumed $L_{H,\lambda}$-Lipschitz continuity of $\nabla_x L_{\cdot,\alpha}(x,y^*_{\lambda^*,\alpha}(x))$ w.r.t the dual variable).
Invoking Lemma~\ref{lemma:solution_approx} for $\|y^*_{\tilde{\lambda},\alpha}(x) - y^*_{\lambda^*,\alpha}(x)\| \leq \frac{C_{\text{sol}}}{\mu}(\alpha_1 + \alpha_2)\delta$, and using the condition $\|\tilde{\lambda}(x)-\lambda^*(x)\|\le C_\lambda\delta$ (from Assumption~\ref{assumption:stochastic_oracle}, with $\delta = \Theta(\alpha^3)$ as per this lemma's setup):
\begin{align}
T_2 &\leq L_{H,y} \cdot \frac{C_{\text{sol}}}{\mu}(\alpha_1 + \alpha_2)\delta + L_{H,\lambda} \cdot C_\lambda \delta \\
&= O\left(\frac{(\alpha_1 + \alpha_2)\delta}{\mu}\right) + O(\delta). \label{eq:T2_bound_detail}
\end{align}
Given $\alpha_1 = \alpha^{-2}$, $\alpha_2 = \alpha^{-4}$, so $(\alpha_1+\alpha_2) = O(\alpha^{-4})$. With $\delta = \Theta(\alpha^3)$ and $\mu = \Theta(\alpha^{-2})$ (from the lemma condition $\mu \ge c_\mu \alpha^{-2}$), the first term is $O\left(\frac{\alpha^{-4}\alpha^3}{\alpha^{-2}}\right) = O(\alpha)$. The second term $O(\delta)$ is $O(\alpha^3)$. Thus, $T_2 = O(\alpha)$.

$(T_3)$: This term measures the inherent approximation error of the idealized penalty method (using true $\lambda^*$ and exact minimization $y^*_{\lambda^*,\alpha}(x)$) with respect to the true hypergradient $\nabla F(x)$. As per the lemma's setup, this is bounded by:
\begin{align}
T_3 = \|\nabla_x L_{\lambda^*,\alpha}(x, y^*_{\lambda^*,\alpha}(x)) - \nabla F(x)\| \leq C_{\text{pen}} \alpha,
\end{align}
for some problem-dependent constant $C_{\text{pen}}$.

Combining Terms: Summing the bounds for $T_1, T_2,$ and $T_3$, with $\delta = \Theta(\alpha^3)$:
\begin{align}
\|\nabla_x L_{\tilde{\lambda},\alpha}(x, \tilde{y}(x)) - \nabla F(x)\| &\leq O(\delta) + O(\alpha) + O(C_{\text{pen}}\alpha) \\
&= O(\alpha^3) + O(\alpha) + O(\alpha) = O(\alpha).
\end{align}
Since $\mathbb{E}[\nabla\tilde{F}(x)] = \nabla_x L_{\tilde{\lambda},\alpha}(x, \tilde{y}(x))$, we conclude that $\|\mathbb{E}[\nabla\tilde{F}(x)] - \nabla F(x)\| \leq C_{\text{bias}}\alpha$. The conditions $\delta = \Theta(\alpha^3)$ and $\mu= \Theta(\alpha^{-2})$ ensure that all error components are either $O(\alpha)$ or of a smaller order.
\end{proof}

\variancebound*
\begin{proof}
Let $G_j = \nabla_x \tilde{L}_{\tilde{\lambda}, \alpha}(x, \tilde{y};\xi_j)$ for $j=1,\dots,N_g$. Conditional on $x, \tilde{\lambda},\tilde{y}$, these $G_j$ are i.i.d. random vectors with mean $\mathbb{E}[G_j \mid x, \tilde{\lambda},\tilde{y}] = \nabla_x L_{\tilde{\lambda}, \alpha}(x, \tilde{y}(x)) = \mathbb{E}[\nabla\tilde{F}(x) \mid x, \tilde{\lambda}, \tilde{y}]$.

The conditional variance of the averaged estimator is:
\begin{align}
\operatorname{Var}(\nabla\tilde{F}(x) \mid x, \tilde{\lambda}, \tilde{y}) &= \operatorname{Var}\left(\frac{1}{N_g} \sum_{j=1}^{N_g} G_j \; \Bigg| \; x, \tilde{\lambda}, \tilde{y} \right) 
= \frac{1}{N_g^2} \text{Var}\left(\sum_{j=1}^{N_g} G_j \; \Bigg| \; x, \tilde{\lambda}, \tilde{y} \right)
\end{align}

Since the $G_j$ are independent conditional on $x, \tilde{\lambda}, \tilde{y}$, we have:
\begin{align}
\operatorname{Var}\left(\sum_{j=1}^{N_g} G_j \; \Bigg| \; x, \tilde{\lambda}, \tilde{y} \right) = \sum_{j=1}^{N_g} \operatorname{Var}(G_j \mid x, \tilde{\lambda}, \tilde{y})
\end{align}

By our Assumption~\ref{assumption:stochastic_oracle}(ii), $\operatorname{Var}(G_j \mid x, \tilde{\lambda}, \tilde{y}) \leq \sigma^2$ for all $j$. Therefore:
\begin{align}
\operatorname{Var}(\nabla\tilde{F}(x) \mid x, \tilde{\lambda}, \tilde{y}) &= \frac{1}{N_g^2} \sum_{j=1}^{N_g} \operatorname{Var}(G_j \mid x, \tilde{\lambda}, \tilde{y}) 
\leq \frac{1}{N_g^2} \sum_{j=1}^{N_g} \sigma^2 
= \frac{N_g \sigma^2}{N_g^2} 
= \frac{\sigma^2}{N_g}
\end{align}

Thus, the variance of the hypergradient estimator is bounded by $\frac{\sigma^2}{N_g}$.
\end{proof}

\hypergradaccuracy*
\begin{proof}
i) Using the bias-variance decomposition and properties of conditional expectation:
\begin{align}
\mathbb{E}[\|\nabla\tilde{F}(x) - \nabla F(x)\|^2] &= \mathbb{E}[\|\nabla\tilde{F}(x) - \mathbb{E}[\nabla\tilde{F}(x)] + \mathbb{E}[\nabla\tilde{F}(x)] - \nabla F(x)\|^2]
\end{align}

By the inequality $\|a+b\|^2 \leq 2\|a\|^2 + 2\|b\|^2$:
\begin{align}
\mathbb{E}[\|\nabla\tilde{F}(x) - \nabla F(x)\|^2] &\leq 2\mathbb{E}[\|\nabla\tilde{F}(x) - \mathbb{E}[\nabla\tilde{F}(x)]\|^2] + 2\|\mathbb{E}[\nabla\tilde{F}(x)] - \nabla F(x)\|^2
\end{align}

The first term is the expected conditional variance:
\begin{align}
\mathbb{E}[\|\nabla\tilde{F}(x) - \mathbb{E}[\nabla\tilde{F}(x)]\|^2] &= \mathbb{E}[\mathbb{E}[\|\nabla\tilde{F}(x) - \mathbb{E}[\nabla\tilde{F}(x)]\|^2 \mid x, \tilde{\lambda}, \tilde{y}]] \\
&= \mathbb{E}[\operatorname{Var}(\nabla\tilde{F}(x) \mid x, \tilde{\lambda}, \tilde{y})]
\end{align}

From Lemma~\ref{lemma:variance-bound}, we know that $\operatorname{Var}(\nabla\tilde{F}(x) \mid x, \tilde{\lambda}, \tilde{y}) \leq \frac{\sigma^2}{N_g}$. Therefore:
\begin{align}
\mathbb{E}[\|\nabla\tilde{F}(x) - \mathbb{E}[\nabla\tilde{F}(x)]\|^2] \leq \frac{\sigma^2}{N_g}
\end{align}

The second term is the squared bias, which from Lemma~\ref{lemma:bias_bound} is bounded by:
\begin{align}
\|\mathbb{E}[\nabla\tilde{F}(x)] - \nabla F(x)\|^2 \leq (C_{\text{bias}} \alpha)^2 = C_{\text{bias}}^2 \alpha^2
\end{align}

Combining these bounds:
\begin{align}
\mathbb{E}[\|\nabla\tilde{F}(x) - \nabla F(x)\|^2] &\leq 2 \cdot \frac{\sigma^2}{N_g} + 2 \cdot C_{\text{bias}}^2 \alpha^2 \\
&= 2 C_{\text{bias}}^2 \alpha^2 + \frac{2 \sigma^2}{N_g}
\end{align}
\end{proof}

\innercost*
\begin{proof}
We count the stochastic-gradient oracle calls made in one execution of
Algorithm~\ref{alg:hypergradient_oracle}.  
The inner tolerance is $\delta=\Theta(\alpha^{3})$.

C1.  Lower-level pair $(\tilde y^{*},\tilde\lambda^{*})$:
For every outer iterate $x$, the constrained LL objective
$g(x,\cdot)$ is $\mu_{g}$-strongly convex and $C_{g}$-smooth
(Assumption~\ref{assumption:smoothness}).  
We solve the constrained lower-level problem using a stochastic primal-dual (SPD) method. The update rule alternates between:
\begin{align}
y_{t+1} &= y_t - \eta_y \left(\nabla_y \tilde{g}(x, y_t; \zeta_t) + \mathbf{B}^T \lambda_t\right), \\
\lambda_{t+1} &= \max\left\{0, \lambda_t + \eta_\lambda \left(\mathbf{A}x - \mathbf{B}y_{t+1} - \mathbf{b}\right)\right\},
\end{align}
where $\eta_y, \eta_\lambda > 0$ are step sizes, and $\tilde{g}$ denotes the stochastic gradient oracle. Under Assumption~\ref{assumption:stochastic_oracle}, this SPD algorithm with appropriate step sizes satisfies linear convergence in expectation: 
\(
\EE\|y_{t}-y^{*}\|^{2}\le
\bigl(1-\tfrac1{\kappa_{g}}\bigr)^{t}\!D_{0}^{2},
\quad
\kappa_{g}:=C_{g}/\mu_{g}
\)
and similarly for the dual variables. Hence
\[
t_{1}=O\!\bigl(\kappa_{g}\log(1/\delta)\bigr)
=O\!\Bigl(\tfrac{C_{g}}{\mu_{g}}\log\tfrac1\delta\Bigr)
\]
oracle calls give
$\mathbb{E}[\|\,\tilde y^{*}-y^{*}\|],\;
\mathbb{E}[\|\,\tilde\lambda^{*}-\lambda^{*}\|]\le\delta$.

C2. Penalty minimisation $(\tilde y)$: With $\alpha_{1}=\alpha^{-2}$ and $\alpha_{2}=\alpha^{-4}$ we analyze
$L_{\tilde\lambda^{*},\alpha}(x,\cdot)$:

\begin{itemize}
\item \textit{Strong convexity.}  
The term $\alpha_{1}g$ contributes $\alpha_{1}\mu_{g}$;
the smooth term $f$ can subtract at most $C_{f}$ curvature.
For sufficiently small $\alpha$,
$\mu_{\mathrm{pen}}\ge\alpha_{1}\mu_{g}/2$.
\item \textit{Smoothness.}  
Because each $h_{i}$ is affine in $y$,
the quadratic penalty has Hessian bounded by
$\alpha_{2}\|B\|^{2}$, so
$L_{\mathrm{pen}}=\Theta(\alpha_{2})$
\end{itemize}

Therefore the condition number is
\[
\kappa_{\mathrm{pen}}
=\frac{L_{\mathrm{pen}}}{\mu_{\mathrm{pen}}}
=\Theta\!\bigl(\alpha^{-2}/\mu_{g}\bigr).
\]
A linear-rate variance-reduced method (SVRG) requires
$t_{2}=O\!\bigl(\kappa_{\mathrm{pen}}\log(1/\delta)\bigr)
=O\!\bigl(\alpha^{-2}\log(1/\delta)/\mu_{g}\bigr)$
oracle calls to attain
$\|\tilde y-y^{*}_{\tilde\lambda^{*},\alpha}\|\le\delta$(\cite{PMLR-SVRG2013}).                  

C3. Total inner cost: Summing $t_{1}$ and $t_{2}$ and adding the mini-batch evaluations:

\[
\text{cost}(x)
=O\!\Bigl(
\bigl(\tfrac{C_{g}}{\mu_{g}}
+\tfrac{\alpha^{-2}}{\mu_{g}}\bigr)
\log\tfrac1\delta
\Bigr)
\;+\;
N_{g}.
\]

Because $\delta=\Theta(\alpha^{3})$,
$\log(1/\delta)=3\log(1/\alpha)$ (absorbed into  $\tilde O(\cdot)$) and $\alpha^{-2}$ dominates $C_{g}$ for
small $\alpha$, so

\[
\text{cost}(x)=\tilde O\!\bigl(\alpha^{-2}/\mu_{g}\bigr)+N_{g}.
\]

Using Lemma~\ref{lemma:variance-bound},
$N_{g}$ should satisfy
$\sigma/\sqrt{N_{g}}\;\asymp\;\alpha$,
hence
$N_{g}=\Theta(\sigma^{2}/\alpha^{2})$.                       
Plugging in,
\(\text{cost}(x)=\tilde O(\alpha^{-2})\)
( constants depending on $\mu_{g}$ and $\sigma^{2}$ are absorbed).

With this batch size,
\(
\EE\bigl\|\tilde\nabla F(x)-\nabla F(x)\bigr\|
\le
O(\alpha)+\sigma/\sqrt{N_{g}}
=O(\alpha),
\)
so the oracle outputs an $\alpha$-accurate hyper-gradient.

Set $\alpha=\Theta(\varepsilon)$ for outer-loop tolerance
$\varepsilon$; the inner cost becomes
$\tilde O(\varepsilon^{-2})$
\end{proof}

\convergence*
\begin{proof}
For any $t \in [T]$, since $x_t = x_{t-1} + \Delta_t$, we have by the fundamental theorem of calculus:
\begin{align}
F(x_t) - F(x_{t-1}) &= \int_0^1 \langle \nabla F(x_{t-1} + s\Delta_t), \Delta_t \rangle ds \label{eq:fund_thm}\\
&= \mathbb{E}_{s_t \sim \text{Unif}[0,1]}[\langle \nabla F(x_{t-1} + s_t\Delta_t), \Delta_t \rangle] \label{eq:expectation}\\
&= \mathbb{E}[\langle \nabla F(z_t), \Delta_t \rangle] \label{eq:expectation_z}
\end{align}
where equation \eqref{eq:expectation_z} follows from our algorithm's definition of $z_t = x_{t-1} + s_t\Delta_t$. Summing over $t \in [T] = [K \times M]$:
\begin{align}
\inf F \leq F(x_T) & =  F(x_0) + \sum_{t=1}^T \mathbb{E}[\langle \nabla F(z_t), \Delta_t \rangle] \label{eq:sum_ineq}\\
&= F(x_0) + \underbrace{\sum_{k=1}^K \sum_{m=1}^M \mathbb{E}[\langle \nabla F(z_{(k-1)M+m}), \Delta_{(k-1)M+m} - u_k \rangle]}_{\text{regret of online gradient descent}} \nonumber\\
&\quad + \underbrace{\sum_{k=1}^K \sum_{m=1}^M \mathbb{E}[\langle \nabla F(z_{(k-1)M+m}), u_k \rangle]}_{\text{Gradient norm}}\label{eq:decomp_sum}
\end{align}
where we've added and subtracted $\langle \nabla F(z_t), u_k \rangle$ in \eqref{eq:decomp_sum} for any sequence of reference points $u_1, \ldots, u_K \in \mathbb{R}^d$ satisfying $\| u_i \| \leq D$ for all $i$. 

The first double sum represents the regret of online gradient descent with stochastic gradients. For any $t \in [T]$:
\begin{align}
\|\Delta_{t+1} - u_k\|^2 &= \|\text{clip}_D(\Delta_t - \eta\tilde{g}_t) - u_k\|^2 \label{eq:clip_bound_1}\\
&\leq \|\Delta_t - \eta\tilde{g}_t - u_k\|^2 \label{eq:clip_bound_2}\\
&= \|\Delta_t - u_k\|^2 + \eta^2\|\tilde{g}_t\|^2 - 2\eta\langle \Delta_t - u_k, \tilde{g}_t \rangle \label{eq:expand_norm}
\end{align}
where \eqref{eq:clip_bound_2} follows since projection onto a convex set decreases distance. Rearranging \eqref{eq:expand_norm}:
\begin{align}
\langle \tilde{g}_t, \Delta_t - u_k \rangle \leq \frac{\|\Delta_t - u_k\|^2 - \|\Delta_{t+1} - u_k\|^2}{2\eta} + \frac{\eta\|\tilde{g}_t\|^2}{2} \label{eq:rearrange}
\end{align}
Now, we decompose the key inner product using the bias-variance structure of our stochastic gradient oracle:
\begin{align}
\mathbb{E}[\langle \nabla F(z_t), \Delta_t - u_k \rangle] &= \mathbb{E}[\langle \tilde{g}_t, \Delta_t - u_k \rangle] + \mathbb{E}[\langle \nabla F(z_t) - \tilde{g}_t, \Delta_t - u_k \rangle] \label{eq:decomp_inner}
\end{align}

\paragraph{First term in \cref{eq:decomp_inner}:} For the first term in \eqref{eq:decomp_inner}, using inequality \eqref{eq:rearrange}:
\begin{align}
\mathbb{E}[\langle \tilde{g}_t, \Delta_t - u_k \rangle] &\leq \mathbb{E}\left[\frac{\|\Delta_t - u_k\|^2 - \|\Delta_{t+1} - u_k\|^2}{2\eta} + \frac{\eta\|\tilde{g}_t\|^2}{2}\right] \label{eq:bound_first_term}
\end{align}
For the expected squared norm in \eqref{eq:bound_first_term}, using the bias-variance decomposition and the $L$-Lipschitz property of $F$:
\begin{align}
\mathbb{E}[\|\tilde{g}_t\|^2] &= \mathbb{E}[\|\mathbb{E}[\tilde{g}_t|z_t] + (\tilde{g}_t - \mathbb{E}[\tilde{g}_t|z_t])\|^2] \label{eq:expand_square}\\
&\leq \mathbb{E}[\|\mathbb{E}[\tilde{g}_t|z_t]\|^2] + \mathbb{E}[\|\tilde{g}_t - \mathbb{E}[\tilde{g}_t|z_t]\|^2] \label{eq:orthogonality}\\
&\leq L^2 + \frac{\sigma^2}{N_g} \label{eq:lipschitz_bound}\\
&= L^2 + O(\alpha^2) = O(1) \quad \text{(for small } \alpha = o(1) \text{ and $L$ is a given constant)} \label{eq:norm_final}
\end{align}
where \eqref{eq:orthogonality} follows from the orthogonality of bias and variance terms.
Therefore, we have:
\begin{align}
\mathbb{E}[\langle \tilde{g}_t, \Delta_t - u_k \rangle] &\leq \mathbb{E}\left[\frac{\|\Delta_t - u_k\|^2 - \|\Delta_{t+1} - u_k\|^2}{2\eta} \right] + O(\eta) \label{eq:final_first_term}
\end{align}

\paragraph{Second term in \cref{eq:decomp_inner}:} based on Cauchy-Schwarz inequality and noting that both $\|\Delta_t\|\leq D$ and $\|u_k\|\leq D$ by construction, we have:
\begin{align}
\mathbb{E}[\langle \nabla F(z_t) - \tilde{g}_t, \Delta_t - u_k \rangle] &\leq \mathbb{E}[\|\nabla F(z_t) - \tilde{g}_t\| \cdot \|\Delta_t - u_k\|] \label{eq:cs_ineq}\\
&\leq 2D \cdot \mathbb{E}[\|\nabla F(z_t) - \tilde{g}_t\|] \label{eq:bound_second_term}
\end{align}

By triangle inequality and the properties of our stochastic oracle:
\begin{align}
\mathbb{E}[\|\nabla F(z_t) - \tilde{g}_t\|] &\leq \mathbb{E}[\|\nabla F(z_t) - \mathbb{E}[\tilde{g}_t|z_t]\|] + \mathbb{E}[\|\mathbb{E}[\tilde{g}_t|z_t] - \tilde{g}_t\|] \label{eq:triangle}\\
&\leq C_{\text{bias}}\alpha + \mathbb{E}[\|\mathbb{E}[\tilde{g}_t|z_t] - \tilde{g}_t\|] \label{eq:bias_bound}
\end{align}

For the variance term in \eqref{eq:bias_bound}, by Jensen's inequality:
\begin{align}
\mathbb{E}[\|\mathbb{E}[\tilde{g}_t|z_t] - \tilde{g}_t\|] &\leq \sqrt{\mathbb{E}[\|\mathbb{E}[\tilde{g}_t|z_t] - \tilde{g}_t\|^2]} \label{eq:jensen}\\
&= \sqrt{\mathbb{E}[\mathbb{E}[\|\mathbb{E}[\tilde{g}_t|z_t] - \tilde{g}_t\|^2|z_t]]} \label{eq:tower}\\
&= \sqrt{\mathbb{E}[\operatorname{Var}(\tilde{g}_t|z_t)]} \leq \frac{\sigma}{\sqrt{N_g}} \label{eq:var_bound}
\end{align}
where \eqref{eq:tower} follows from the tower property of conditional expectation, and \eqref{eq:var_bound} uses our variance bound assumption.
When $N_g = \Theta(\frac{\sigma^2}{\alpha^2})$ ensures:
\begin{align}
\mathbb{E}[\|\nabla F(z_t) - \tilde{g}_t\|] \leq C_{\text{bias}}\alpha + \frac{\sigma}{\sqrt{N_g}} = C_{\text{bias}}\alpha + O(\alpha) = O(\alpha) \label{eq:total_error}
\end{align}

Therefore, combining \eqref{eq:bound_second_term} and \eqref{eq:total_error}, we can bound the second term in \cref{eq:decomp_inner} by:
\begin{align}
\mathbb{E}[\langle \nabla F(z_t) - \tilde{g}_t, \Delta_t - u_k \rangle] \leq 2D \cdot O(\alpha) = O(D\alpha) \label{eq:final_second_term}
\end{align}

\paragraph{Putting together first term and second term}:
Combining \eqref{eq:final_first_term} and \eqref{eq:final_second_term}:
\begin{align}
\mathbb{E}[\langle \nabla F(z_t), \Delta_t - u_k \rangle] &\leq \mathbb{E}\left[\frac{\|\Delta_t - u_k\|^2 - \|\Delta_{t+1} - u_k\|^2}{2\eta}\right] + O(\eta) + O(D\alpha) \label{eq:combined_bound}
\end{align}

Summing \eqref{eq:combined_bound} over $t = (k-1)M + m$ with $m = 1, \ldots, M$ for a fixed $k$:
\begin{align}
& \sum_{m=1}^M \mathbb{E}[\langle \nabla F(z_{(k-1)M+m}), \Delta_{(k-1)M+m} - u_k \rangle] \nonumber\\
\leq & \sum_{m=1}^M \mathbb{E}\left[\frac{\|\Delta_{(k-1)M+m} - u_k\|^2 - \|\Delta_{(k-1)M+m+1} - u_k\|^2}{2\eta}\right] + \sum_{m=1}^M O(\eta) + \sum_{m=1}^M O(D\alpha) \label{eq:sum_m}\\
\leq & \frac{\mathbb{E}[\|\Delta_{(k-1)M+1} - u_k\|^2 - \|\Delta_{(k-1)M+M+1} - u_k\|^2]}{2\eta} + O(M \eta) +  O(M D\alpha) \label{eq:telescope}
\end{align}

Since $\|\Delta_t\| \leq D$ and $\|u_k\| \leq D$, we have $\|\Delta_t - u_k\| \leq 2D ~\forall t$. Therefore, we can further bound \eqref{eq:telescope} by:
\begin{align}
& \frac{\mathbb{E}[\|\Delta_{(k-1)M+1} - u_k\|^2 - \|\Delta_{(k-1)M+M+1} - u_k\|^2]}{2\eta} + O(M \eta) +  O(M D\alpha) \\
\leq & \frac{4D^2}{2\eta} +O(M \eta) + O(M D\alpha) \nonumber\\
= & O(\frac{D^2}{\eta} + M\eta + MD\alpha)  
\end{align}

Since this inequality holds for all $\eta \in \mathbb{R}_+$, we can choose $\eta = O(\frac{D}{\sqrt{M}})$ to minimize the upper bound to get the tightest upper bound:
\begin{align}
     \sum_{m=1}^M \mathbb{E}[\langle \nabla F(z_{(k-1)M+m}), \Delta_{(k-1)M+m} - u_k \rangle] \leq O(D \sqrt{M} + MD \alpha) \label{eq:bound_sum_m}
\end{align}

\subsubsection*{Part 2: bounding the regret of online gradient descent in \cref{eq:decomp_sum}}
For the second term in \eqref{eq:decomp_sum}, we choose $u_k$ strategically to extract the Goldstein subdifferential:
\begin{align}
u_k = -D \cdot \frac{\sum_{m=1}^M \nabla F(z_{(k-1)M+m})}{\|\sum_{m=1}^M \nabla F(z_{(k-1)M+m})\|} \label{eq:u_k_choice}
\end{align}

With this choice of $u_k$:
\begin{align}
\sum_{m=1}^M \langle \nabla F(z_{(k-1)M+m}), u_k \rangle &= -D \cdot \left\|\sum_{m=1}^M \nabla F(z_{(k-1)M+m})\right\| \label{eq:extract_norm}\\
&= -D M \cdot \left\|\frac{1}{M}\sum_{m=1}^M \nabla F(z_{(k-1)M+m})\right\| \label{eq:avg_grad}
\end{align}

Substituting \eqref{eq:bound_sum_m} and \eqref{eq:avg_grad} into \eqref{eq:decomp_sum}, and then into \eqref{eq:sum_ineq}:
\begin{align}
F(x_0) - \inf F &\geq \sum_{k=1}^K \left[- O(D \sqrt{M}) - O(M D\alpha) + D M \cdot \left\|\frac{1}{M}\sum_{m=1}^M \nabla F(z_{(k-1)M+m})\right\|\right] \label{eq:rearranged_ineq}
\end{align}

Solving for the average over $k$:
\begin{align}
\frac{1}{K}\sum_{k=1}^K \left\|\frac{1}{M}\sum_{m=1}^M \nabla F(z_{(k-1)M+m})\right\| &\leq \frac{F(x_0) - \inf F}{D M K} + O(\frac{1}{\sqrt{M}}) + O(\alpha) \label{eq:avg_bound}
\end{align}

For the randomly chosen output $x_{\text{out}} \sim \text{Uniform}\{x_1, \ldots, x_K\}$:
\begin{align}
\mathbb{E}\left[\left\|\frac{1}{M}\sum_{m=1}^M \nabla F(z_{(k-1)M+m})\right\|\right] &\leq \frac{F(x_0) - \inf F}{D M K} + O(\frac{1}{\sqrt{M}}) + O(\alpha) \label{eq:expectation_bound}
\end{align}

The key insight is that these averages approximate the Goldstein subdifferential. Since $\|z_{(k-1)M+m} - x_k\| \leq M D \leq \delta$ (by our choice of $M = \lfloor\frac{\delta}{D}\rfloor$), we have:
\begin{align}
\nabla F(z_{(k-1)M+m}) \in \partial_\delta F(x_k) \text{ for all } m \in [M] \label{eq:subdiff_membership}
\end{align}

By convexity of the Goldstein subdifferential:
\begin{align}
\frac{1}{M}\sum_{m=1}^M \nabla F(z_{(k-1)M+m}) \in \partial_\delta F(x_k) \label{eq:avg_in_subdiff}
\end{align}

Therefore, from \eqref{eq:expectation_bound} and \eqref{eq:avg_in_subdiff}:
\begin{align}
\mathbb{E}[\text{dist}(0, \partial_\delta F(x_{\text{out}}))] &\leq \frac{F(x_0) - \inf F}{D M K} + \Theta\left(\frac{1}{\sqrt{M}}\right) + \Theta(\alpha) \label{eq:dist_bound}
\end{align}
To achieve $\mathbb{E}[\text{dist}(0, \partial_\delta F(x_{\text{out}}))] = \Theta(\epsilon)$, we set $\alpha = \Theta(\epsilon)$ and balance the remaining terms:
\begin{align}
\frac{F(x_0) - \inf F}{D M K} + \Theta\left(\frac{1}{\sqrt{M}}\right) \leq \epsilon \label{eq:constraint}
\end{align}
Let $C_0 = F(x_0) - \inf F$. We need both terms to be $\Theta(\epsilon)$:
\begin{align}
\frac{C_0}{DMK} &= \Theta(\epsilon) \label{eq:term1} \\
\frac{1}{\sqrt{M}} &= \Theta(\epsilon) \label{eq:term2}
\end{align}
From \eqref{eq:term2}, we get:
\begin{align}
\frac{1}{\sqrt{M}} = \Theta(\epsilon) \implies M = \Theta\left(\frac{1}{\epsilon^2}\right)
\end{align}
Since $M = \lfloor\frac{\delta}{D}\rfloor$, we require $M \approx \frac{\delta}{D}$ to be consistent. Combining this with $M = \Theta(1/\epsilon^2)$ from \eqref{eq:term2}, we obtain:
\begin{align}
\frac{\delta}{D} = \Theta\left(\frac{1}{\epsilon^2}\right) \implies D = \Theta\left(\delta\epsilon^2\right)
\end{align}
Note that $\delta$ and $\epsilon$ are independent parameters: $\delta$ controls the radius of the Goldstein subdifferential (a fixed parameter of the stationarity concept), while $\epsilon$ controls the accuracy of stationarity. The relationship $D = \Theta(\delta\epsilon^2)$ ensures that the algorithm's clipping parameter $D$ scales appropriately with both parameters. 
Set $D = \Theta(\delta\epsilon^2)$ and $M = \Theta\left(\frac{1}{\epsilon^2}\right)$ to satisfy this constraint.

From \eqref{eq:term1}, we can determine $K$:
\begin{align}
\frac{C_0}{DMK} = \Theta(\epsilon) \implies K = \Theta\left(\frac{C_0}{DM\epsilon}\right)
\end{align}
Substituting our choices for $D$ and $M$:
\begin{align}
K &= \Theta\left(\frac{C_0}{\delta\epsilon^2 \cdot \frac{1}{\epsilon^2} \cdot \epsilon}\right) \\
&= \Theta\left(\frac{C_0}{\delta\epsilon}\right)
\end{align}
Let's set $K = \Theta\left(\frac{C_0}{\delta\epsilon}\right)$ to satisfy this constraint.
For the step size $\eta$, we need to ensure stability of the algorithm. Based on standard analysis of stochastic gradient methods, we typically set:
\begin{align}
\eta = \Theta\left(\frac{D}{\sqrt{M}}\right) = \Theta\left(\delta\epsilon^2 \cdot \epsilon\right) = \Theta\left(\delta\epsilon^3\right)
\end{align}
Therefore, our final parameter settings are:
\begin{align}
D &= \Theta(\delta\epsilon^2) \\
M &= \Theta\left(\frac{1}{\epsilon^2}\right) \\
K &= \Theta\left(\frac{C_0}{\delta\epsilon}\right) \\
\eta &= \Theta(\delta\epsilon^3)
\end{align}

Therefore, these parameter choices lead to $\mathbb{E}[\text{dist}(0, \partial_\delta F(x_{\text{out}}))] \leq \epsilon + O(\alpha)$

\end{proof}

\end{document}